\documentclass[11pt]{amsart}
 \usepackage{amsmath,amssymb,enumerate}
 \usepackage[all]{xy}
 \usepackage{xcolor} 
 
 \newtheorem{pro}{Proposition}[section]
 \newtheorem{cor}[pro]{Corollary}
 \newtheorem{lem}[pro]{Lemma}
 \newtheorem{rem}[pro]{Remark}
 \newtheorem{exa}[pro]{Example}
 
  \newtheorem{defi}[pro]{Definition}
 \newtheorem{theo}[pro]{Theorem}

 \newcommand{\R}{\mathbb R}
 
 \newcommand{\C}{\mathbb C}
 \newcommand{\N}{\mathbb N}

 \newcommand \mrm{\mathrm}

 \numberwithin{equation}{section}

\usepackage{hyperref}

 \begin{document}
 \title[  Applications of Logarithmic Potentials]{ Some applications of Projective Logarithmic Potentials}
\setcounter{tocdepth}{1}
\author{Sa\"{\i}d Asserda and Fatima Zahra Assila} 
\address{Ibn Tofail University \\ Faculty of Sciences  \\ PO 242 \\
Kenitra \\
Marocco}
\email{said.asserda@uit.ac.ma, fatima.zahra.assila@uit.ac.ma}
 \date{\today}
\begin{abstract}
  We continue the study in \cite{As18, AAZ18} by giving a multitude of applications of projective logarithmic potentials. First we  introduce the notions of projective logarithmic energy and capacity associated to projective kernel that was  introduced and studied in \cite{As18, AAZ18}.  We compare quantitatively the projective logarithmic capacity with the complex Monge-Amp\`ere capacity on $\mathbb P^n$ and we deduce that the set of zero logarithmic capacity is of Monge-Amp\`ere capacity zero. Further, we define transfinite diameter of a compact set and we show that it coincides with logarithmic capacity. Finally we deduce that there is an analogous of  classical Evans's theorem that  for any compact set $K$ of zero projective logarithmic capacity shows the existence of Probability measure whose potential admits $K$ as polar set.
 \end{abstract}
\maketitle
\tableofcontents
\section{Introduction}
In Potential Theory different notions of energies (depending on the problem under study) associated to Borel measures, were introduced with respect to theirs  potentials (\cite{C65, Ran95, Pa04}). They allow to define capacities as a set functions which characterize small sets. On the other hand, the kernel allows to define the transfinite diameter and chebyshev constant notions. These help to give another approach to capacity which has revealed so useful in the local theory.\\
  In the complex plane (\cite{Ran95}), capacities enjoy several natural properties. This is due to the fact that logarithmic kernel satisfies the maximum principle. The importance of the maximum principle lies in the fact that from local assumptions it derives global conclusions. Such results are usually very powerful, and the maximum principle is no exception. Carleson \cite{C65} gave  an exceptional class of examples satisfying the maximum principle in  the euclidean space $\mathbb{R}^d,\;\; d\geq 3 $.  The situation is different for the logarithmic potential in higher complex dimension, since there is no maximum principle.\\  
  The aim of this paper  is to investigate  which properties of classical potentials remain true in  the context of complex projective space. More precisely,  the projective logarithmic  potential $G_\mu$ which is defined earlier in \cite{As18, AAZ18} enjoys several interesting properties like in the local setting. For applicatons: we  study the projective logarithmic energy of a Probability measure on the complex projective space $\mathbb P^n$ that allows to define the projective logarithmic  capacity $\kappa$ and $\kappa-$polar sets.\\ 
   We also introduce and study the projective logarithmic transfinite diameter, the projective chebyshev constant and we show that there is a relationship between them and capacity $\kappa$. We conclude by proving our main result which  is  a strong generalization of classical Evans's theorem. \\ 
 We briefly describe the main results and ideas of this paper:  let $\mu$  be a Probability measure on the complex projective space $\mathbb P^n $. Then its projective logarithmic capacity is defined on $Prob(\mathbb P^n)$ as follows:
  $$
 \kappa (E) :=\sup \{ e^{- I(\mu)}; \; \mu \in \text{Prob}  (\mathbb P^n), \mrm{ Supp} \mu \subset E\}
 $$
 where 
 \begin{eqnarray*}
I(\mu)&:=&- \int_{\mathbb P^n} G_\mu(\zeta)d\mu(\zeta)\\
&:=&  - \int_{\mathbb P^n}   \int_{\mathbb P^n} \log \left (\frac{\vert \zeta \wedge \eta\vert}{\vert \zeta\vert \vert \eta\vert}\right)  d \mu (\zeta) \ d \mu (\eta), 
 \end{eqnarray*}
 is the projective logarithmic energy.\\
 Using results and tools in \cite{As18, AAZ18}, we show that $\kappa-$polars sets (i.e a set has zero projective logarithmic capacity), are pluripolars with a precise quantitative estimate of  the Monge-Amp\`ere Capacity in term  of the  logarithmic capacity :\\
{\bf Theorem A}. {\it Let $E \subset \mathbb P^n$ be a Borel set. Then 
 $$
 \mathrm{Cap}_{FS}^\ast (E) \leq \frac{\sqrt{s_n}}{\sqrt{- \log \kappa (E)}},
 $$
 where $s_n$ is a constant of lemma \ref{lem: bounded}.
 In particular, every Borel $\kappa$-polar set is pluripolar.}\\ 
  If $ \mu \in \text{Prob}  (\mathbb P^n)$ be a measure with 
$I(\mu)< +\infty$, then the Borel set $$E=\{ \zeta\in \mathbb P^n; \; G_{\mu}(\zeta)=-\infty\}$$ is $\kappa-$polar. The reverse of this result constitutes the main theorem of this paper, which generalizes and improves Evans's theorem in local  setting. \\
  The second fundamental objects that we are going to prove in this paper can be stated as follows :\\
   {\bf Theorem B}. {\it Let $E \subset \mathbb{P}^n$ be a closed set such that $\kappa(E)=0$. Then there exists a Probability measure  $\mu$ in  $\mathbb{P}^n$ with support in   $E$ such that
  $$
  E =\{\zeta \in \mathbb P^n /\;\; {G}_\mu (\zeta) = -\infty\}.
  $$
  In particular $E$ is a complete pluripolar set of $\mathbb P^n$.}
\section{Semi-continuity of the potentials} 
  We study the upper semi-continuity property of the operator $\mu \longmapsto \mathbb{G}_{\mu}$. We use the weak-topology on the compact space $\text{Prob} (\mathbb P^n)$ of Probability measures on $\mathbb P^n$. We recall that this space is compact.
  \begin{pro}\label{pro: semi-cont}
 Let $(\mu_j)$ be a sequence converging to $ \mu$ in  $\text{Prob} (\mathbb P^n)$. Then
\begin{equation} \label{eq:scont1}
 \limsup_{j \to + \infty} G_{\mu_j} (\zeta) \leq G_{\mu} (\zeta), \, \, \, \forall \zeta \in \mathbb P^n,
\end{equation}
and 
\begin{equation} \label{eq:scont2}
  \limsup_{j \to + \infty}  \int_{\mathbb P^n} G_{\mu_j} (\zeta) d \mu_j (\zeta) \leq \int_{\mathbb P^n} G_{\mu} (\zeta) d \mu (\zeta). 
\end{equation}

 \end{pro}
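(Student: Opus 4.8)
The plan is to treat both inequalities as manifestations of the upper semicontinuity of the kernel
$$
k(\zeta,\eta):=\log\frac{|\zeta\wedge\eta|}{|\zeta|\,|\eta|},
$$
so I would begin by recording its two decisive properties. First, since $|\zeta\wedge\eta|\le |\zeta|\,|\eta|$ (the Cauchy--Schwarz inequality, equivalently the Gram identity $|\zeta\wedge\eta|^2=|\zeta|^2|\eta|^2-|\langle\zeta,\eta\rangle|^2$), we have $k\le 0$, so $k$ is bounded above on the compact space $\mathbb P^n\times\mathbb P^n$. Second, the ratio $(\zeta,\eta)\mapsto |\zeta\wedge\eta|/(|\zeta|\,|\eta|)$ is continuous with values in $[0,1]$, vanishing exactly on the diagonal; composing with the continuous map $\log:[0,1]\to[-\infty,0]$ shows that $k$ is continuous as an $[-\infty,0]$-valued function, hence in particular upper semicontinuous. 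The key device is then the truncation $k_m:=\max(k,-m)$, which for each $m\in\mathbb N$ is a genuine continuous real-valued function with $-m\le k_m\le 0$, and which decreases pointwise to $k$ as $m\to\infty$.

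For inequality \eqref{eq:scont1} I would fix $\zeta\in\mathbb P^n$ and work with the slice $\eta\mapsto k(\zeta,\eta)$. Because $k\le k_m$, we have $G_{\mu_j}(\zeta)=\int k(\zeta,\eta)\,d\mu_j(\eta)\le\int k_m(\zeta,\eta)\,d\mu_j(\eta)$; since $k_m(\zeta,\cdot)$ is continuous and bounded, weak convergence $\mu_j\to\mu$ gives $\int k_m(\zeta,\cdot)\,d\mu_j\to\int k_m(\zeta,\cdot)\,d\mu$, whence $\limsup_j G_{\mu_j}(\zeta)\le\int k_m(\zeta,\eta)\,d\mu(\eta)$. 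Letting $m\to\infty$ and invoking monotone convergence (the $k_m(\zeta,\cdot)$ decrease to $k(\zeta,\cdot)$ and are bounded above by $0$) collapses the right-hand side to $G_\mu(\zeta)$, which is \eqref{eq:scont1}.

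Inequality \eqref{eq:scont2} is the genuinely delicate one, because here both the integrand and the measure vary with $j$, so \eqref{eq:scont1} cannot simply be integrated. I would instead rewrite the quantity as a single integral against a product measure,
$$
\int_{\mathbb P^n}G_{\mu_j}(\zeta)\,d\mu_j(\zeta)=\iint_{\mathbb P^n\times\mathbb P^n}k(\zeta,\eta)\,d(\mu_j\otimes\mu_j)(\zeta,\eta),
$$
and then use that $\mu_j\otimes\mu_j\to\mu\otimes\mu$ weakly on the product space. This passage to the product is the main obstacle, and I would verify it by testing against functions of the form $f(\zeta)g(\eta)$ with $f,g$ continuous, for which $\iint f(\zeta)g(\eta)\,d(\mu_j\otimes\mu_j)=\left(\int f\,d\mu_j\right)\left(\int g\,d\mu_j\right)\to\left(\int f\,d\mu\right)\left(\int g\,d\mu\right)$, and then appealing to the Stone--Weierstrass theorem (such products span a dense subalgebra of $C(\mathbb P^n\times\mathbb P^n)$) to extend the convergence to all continuous test functions. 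With this in hand the argument of the previous paragraph repeats verbatim on the product space: from $k\le k_m$ and the continuity of $k_m$ we get $\iint k\,d(\mu_j\otimes\mu_j)\le\iint k_m\,d(\mu_j\otimes\mu_j)\to\iint k_m\,d(\mu\otimes\mu)$, so that $\limsup_j\iint k\,d(\mu_j\otimes\mu_j)\le\iint k_m\,d(\mu\otimes\mu)$, and a final application of monotone convergence as $m\to\infty$ yields $\iint k\,d(\mu\otimes\mu)=\int_{\mathbb P^n}G_\mu\,d\mu$, which is \eqref{eq:scont2}.
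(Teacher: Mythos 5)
Your proof is correct, and its overall strategy (upper semi-continuity of the kernel plus weak convergence) is the same one the paper gestures at; the difference is that you actually carry it out. The paper's proof is two lines: it invokes ``upper semi-continuity and Fatou's lemma'' for \eqref{eq:scont1}, and then asserts that \eqref{eq:scont2} follows by ``integrating and applying Fatou's lemma again.'' The genuine divergence is in \eqref{eq:scont2}: you correctly identify that this one-liner glosses over the fact that both the integrand $G_{\mu_j}$ and the integrating measure $\mu_j$ vary with $j$, so that \eqref{eq:scont1} cannot simply be integrated, and you resolve the difficulty the right way, by rewriting the energy-type integral against the product measure $\mu_j\otimes\mu_j$, proving $\mu_j\otimes\mu_j\to\mu\otimes\mu$ weakly via Stone--Weierstrass, and then running the same semi-continuity argument on $\mathbb P^n\times\mathbb P^n$. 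Likewise, your truncation $k_m=\max(k,-m)$ --- continuous because the kernel is continuous as a $[-\infty,0]$-valued function --- is precisely the device that makes the portmanteau-type inequality $\limsup_j\int f\,d\mu_j\le\int f\,d\mu$ for upper semi-continuous $f$ bounded above self-contained, whereas the paper cites it without proof. In short: the paper's argument buys brevity at the cost of a real gap in the justification of \eqref{eq:scont2}; your argument fills that gap, with every limit interchange (weak convergence against continuous truncations, then monotone convergence in $m$) explicitly justified.
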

 \begin{proof} Observe that by definition, the integrand is $\leq 0$. Hence by upper semi-continuity and Fatou's lemma, we have

 $$
 \limsup_j G_{\mu_j} (\zeta) \leq \mathbb{G}_{\mu} (\zeta),
 $$
 which proves the inequality (\ref{eq:scont1}). 
Integrating and applying Fatou's again lemma yields the inequality(\ref{eq:scont2}).
 \end{proof}
 
The potential  $G_{\mu}$ enjoys several other interesting properties, we now prove one of these: the continuity principle.

\begin{theo}[Continuity Principle]{\it Let $\mu$ be a Probability measure on $\mathbb{P}^{n}$ and let $ K := \text{Supp} \, \mu \subset \mathbb{P}^n$.\\

1. If $\zeta_{0}\in K$, then $$\liminf_{\zeta \to \zeta_{0}}G_{\mu}(\zeta)=\liminf\limits_{\underset{\zeta^{\prime}\in K}{\zeta^{\prime} \to \zeta_{0}}}G_{\mu}(\zeta^\prime).$$

2. If further $$\lim\limits_{\underset{\zeta^{\prime}\in K}{\zeta^{\prime} \to \zeta_{0}}}G_{\mu}(\zeta^{\prime})=G_{\mu}(\zeta_0),$$ then $$\lim_{\zeta \to \zeta_{0}}G_{\mu}(\zeta)=G_{\mu}(\zeta_0).$$ 
}\end{theo}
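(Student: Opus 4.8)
The plan is to reduce part 2 to part 1 via the upper semicontinuity of $G_\mu$, and to prove the substantive inequality in part 1 by a nearest--point comparison. First, exactly as in Proposition \ref{pro: semi-cont} (the kernel $\log\frac{\vert\zeta\wedge\eta\vert}{\vert\zeta\vert\,\vert\eta\vert}$ is upper semicontinuous and $\le 0$, so reverse Fatou applies in the point variable as well), the function $\zeta\mapsto G_\mu(\zeta)$ is upper semicontinuous on $\mathbb{P}^n$; in particular $\limsup_{\zeta\to\zeta_0}G_\mu(\zeta)\le G_\mu(\zeta_0)$. Since in part 1 the left--hand $\liminf$ ranges over a larger family of approaching points than the right--hand one, the inequality $\liminf_{\zeta\to\zeta_0}G_\mu(\zeta)\le\liminf_{\zeta'\to\zeta_0,\,\zeta'\in K}G_\mu(\zeta')=:m$ is immediate, and the whole content of part 1 is the reverse bound $\liminf_{\zeta\to\zeta_0}G_\mu(\zeta)\ge m$.

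For this I would use $d(\zeta,\eta):=\frac{\vert\zeta\wedge\eta\vert}{\vert\zeta\vert\,\vert\eta\vert}$, which is a genuine distance on $\mathbb{P}^n$ (the chordal Fubini--Study distance, the sine of the angle between the two lines; it satisfies the triangle inequality, as recorded in \cite{As18, AAZ18}). If $\mu(\{\zeta_0\})>0$ then both sides of part 1 equal $-\infty$ and nothing is to be proved, so I assume $\mu(\{\zeta_0\})=0$. Fix an arbitrary sequence $\zeta_n\to\zeta_0$; the indices $n$ with $\zeta_n\in K$ already lie in $K$ and are handled directly, so I may assume $\zeta_n\notin K$ and pick a nearest point $\zeta_n^\ast\in K$, so that $\delta_n:=d(\zeta_n,K)=d(\zeta_n,\zeta_n^\ast)\to 0$ and hence $\zeta_n^\ast\to\zeta_0$ inside $K$. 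The aim is the estimate $G_\mu(\zeta_n)\ge G_\mu(\zeta_n^\ast)-\varepsilon_n$ with $\varepsilon_n\to 0$, obtained by bounding $\int_K\log\frac{d(\zeta_n,\eta)}{d(\zeta_n^\ast,\eta)}\,d\mu(\eta)$ from below.

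\textbf{The main obstacle} is that the crude triangle bound $d(\zeta_n^\ast,\eta)\le\delta_n+d(\zeta_n,\eta)\le 2\,d(\zeta_n,\eta)$ (valid because $d(\zeta_n,\eta)\ge\delta_n$ for $\eta\in K$) only yields the difference $\ge-\log 2$, which is too lossy to take limits. I would overcome this by a near/far splitting at the intermediate scale $\delta_n/\varepsilon$ for a fixed $\varepsilon\in(0,1)$. On the far set $F_n=\{\eta\in K:\ d(\zeta_n,\eta)\ge\delta_n/\varepsilon\}$ the triangle inequality gives $\frac{d(\zeta_n,\eta)}{d(\zeta_n^\ast,\eta)}\ge\frac{1}{1+\varepsilon}$, so the far contribution is $\ge-\log(1+\varepsilon)\ge-\varepsilon$. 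On the near set $N_n=K\setminus F_n$, where both distances are comparable to $\delta_n$, the same inequality gives $\frac{d(\zeta_n,\eta)}{d(\zeta_n^\ast,\eta)}\ge\frac{\delta_n}{2\delta_n/\varepsilon}=\frac{\varepsilon}{2}$, so the near contribution is $\ge\log(\frac{\varepsilon}{2})\,\mu(N_n)$. The decisive point is that $N_n$ is contained in $d$--balls about $\zeta_0$ whose radii tend to $0$, whence $\mu(N_n)\to\mu(\{\zeta_0\})=0$; thus for fixed $\varepsilon$ the near contribution tends to $0$.

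Combining the two pieces gives $\liminf_n\big(G_\mu(\zeta_n)-G_\mu(\zeta_n^\ast)\big)\ge-\varepsilon$. Since $\zeta_n^\ast\to\zeta_0$ within $K$ we have $\liminf_n G_\mu(\zeta_n^\ast)\ge m$, and superadditivity of $\liminf$ yields $\liminf_n G_\mu(\zeta_n)\ge m-\varepsilon$; letting $\varepsilon\downarrow 0$ gives $\liminf_n G_\mu(\zeta_n)\ge m$ for every approaching sequence, which is the reverse bound, and part 1 follows. Finally, for part 2 the hypothesis says the right--hand limit in part 1 exists and equals $G_\mu(\zeta_0)$, so part 1 gives $\liminf_{\zeta\to\zeta_0}G_\mu(\zeta)=G_\mu(\zeta_0)$; together with the upper semicontinuity bound $\limsup_{\zeta\to\zeta_0}G_\mu(\zeta)\le G_\mu(\zeta_0)$ this forces $\lim_{\zeta\to\zeta_0}G_\mu(\zeta)=G_\mu(\zeta_0)$.
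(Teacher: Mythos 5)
Your proof is correct, and its skeleton matches the paper's: dispose of a degenerate case first (you key it on $\mu(\{\zeta_0\})>0$, the paper on $G_\mu(\zeta_0)=-\infty$ via upper semicontinuity), project $\zeta$ to a nearest point of $K$, bound the integral of the log-ratio of distances from below by a near/far splitting that exploits $\mu(\{\zeta_0\})=0$, and deduce part 2 from part 1 plus upper semicontinuity. The genuine difference is in how the splitting is organized, and yours is tighter in two respects. The paper splits at a \emph{fixed} ball $B(\zeta_0,r)$ chosen with $\mu(B(\zeta_0,r))<\epsilon$: inside it, the nearest-point property plus subadditivity of $\sin$ give the uniform ratio bound $2$, contributing $-\epsilon\log 2$; outside it, the paper only gestures (``Observe that for $\zeta$ close to $\zeta_0$ \dots'') at the fact that the integrand tends to $0$, which really requires a uniform-convergence argument on $K\setminus B(\zeta_0,r)$ that is never written down. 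You instead split at the \emph{moving} scale $\delta_n/\varepsilon$: the far contribution is $\geq -\log(1+\varepsilon)$ by the triangle inequality alone (no convergence claim needed), and the near contribution is $\geq \log(\varepsilon/2)\,\mu(N_n)$ with $\mu(N_n)\to 0$ because $N_n$ shrinks to $\{\zeta_0\}$. Moreover, by phrasing everything in terms of the chordal quantity $|\zeta\wedge\eta|/(|\zeta|\,|\eta|)$ being a genuine metric, you sidestep a flaw in the paper's chain of inequalities: the step $\sin\bigl(d(\zeta',\eta)/\sqrt{2}\bigr)\leq \sin\bigl((d(\zeta',\zeta)+d(\zeta,\eta))/\sqrt{2}\bigr)$ uses monotonicity of $\sin$, which fails when $d(\zeta',\zeta)+d(\zeta,\eta)>\pi/\sqrt{2}$; the statement that survives in all cases is precisely the chordal triangle inequality you invoke (a standard fact, provable from $\sin a+\sin b\geq 1$ when $a+b\geq \pi/2$ with $a,b\in[0,\pi/2]$). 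What the paper's organization buys in exchange is brevity: one fixed scale and a single uniform constant. One small caveat on your write-up: the ``immediate'' inequality $\liminf_{\zeta\to\zeta_0}G_\mu(\zeta)\leq \liminf_{\zeta'\to\zeta_0,\,\zeta'\in K}G_\mu(\zeta')$ presupposes that $\zeta_0$ can be approached within $K$; if $\zeta_0$ were isolated in $K=\mathrm{Supp}\,\mu$ then $\mu(\{\zeta_0\})>0$ and your degenerate case already applies, so this costs nothing, but it is worth saying (the paper is silent on the same point).
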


We recall that  the potential $G_{\mu}$ can expressed in terms of the geodesic distance $d$   
( see \cite{As18})  on the k\"{a}hler manifold $(\mathbb{P}^n,\omega_{FS})$, namely we have 
 
 $$G_{\mu} (\zeta)=\int_{\mathbb{P}^n}\log\sin\frac{d(\zeta,\eta)}{\sqrt{2}}d\mu(\eta),\;\;\;\;  \;\; \zeta\in \mathbb{P}^n.$$

\begin{proof}
1. If $G_{\mu}(\zeta_0)=-\infty$, then we have  by using upper semi-continuity that
\begin{eqnarray*}
\lim_{\zeta \to \zeta_{0}}G_{\mu}(\zeta)&\leq& \limsup_{\zeta \to \zeta_{0}}G_{\mu}(\zeta)\\
&\leq& G_{\mu}(\zeta_0)\\
&=& -\infty, 
\end{eqnarray*}
this implies that $$\lim_{\zeta \to \zeta_{0}}G_{\mu}(\zeta)=-\infty,$$ also 

\begin{eqnarray*}
\lim_{\zeta^\prime \to \zeta_{0}}G_{\mu}(\zeta^\prime)&\leq& \limsup_{\zeta^\prime \to \zeta_{0}}G_{\mu}(\zeta^\prime)\\
&\leq& \limsup_{\zeta \to \zeta_{0}}G_{\mu}(\zeta)\\
&\leq& G_{\mu}(\zeta_0)\\
&=& -\infty, 
\end{eqnarray*}
this implies that $$\lim_{\zeta^\prime \to \zeta_{0}}G_{\mu}(\zeta^\prime)=-\infty,$$

 and we have $$\liminf_{\zeta \to \zeta_{0}}G_{\mu}(\zeta)=\liminf\limits_{\underset{\zeta^{\prime}\in K}{\zeta^{\prime} \to \zeta_{0}}}G_{\mu}(\zeta^\prime).$$
Thus we can suppose that $G_{\mu}(\zeta_0)>-\infty$. Then we have $\mu(\{\zeta_0\})=0$. Given  $\epsilon>0$, we can thus find $0<r<<1$, suth that $\mu(B(\zeta_0,r))<\epsilon$. 

For a given $ \zeta \in \mathbb{P}^n $. Choose $ \zeta^\prime\in K $ such that,  $$ \sin\Big(\frac{ d(\zeta^\prime,\zeta)}{\sqrt{2}}\Big) =\inf_{\eta\in \mathbb{P}^n} \sin\Big(\frac{ d(\eta,\zeta)}{\sqrt{2}}\Big).$$ 

Since $ 0\leq d(\zeta,\eta)\leq \frac{\pi}{\sqrt{2}}$,  for all $ \eta\in K$.

\begin{eqnarray*}
\frac{\sin \Big(\frac{ d(\zeta^\prime,\eta)}{\sqrt{2}}\Big)}{\sin\Big(\frac{d(\zeta,\eta)}{\sqrt{2}}\Big)}&\leq& \frac{\sin\Big(\frac{d(\zeta^\prime,\zeta)}{\sqrt{2}}+\frac{ d(\zeta,\eta)}{\sqrt{2}}\Big)}{\sin\Big(\frac{ d(\zeta,\eta)}{\sqrt{2}}\Big)}\\
&\leq&\frac{\sin\Big(\frac{d(\zeta^\prime,\zeta)}{\sqrt{2}}\Big)+\sin\Big(\frac{ d(\zeta,\eta)}{\sqrt{2}}\Big)}{\sin\Big(\frac{ d(\zeta,\eta)}{\sqrt{2}}\Big)}\\
&\leq& 2.
\end{eqnarray*}

Therefore, 
\begin{eqnarray*}
G_{\mu}(\zeta)-G_{\mu}(\zeta^\prime)&=& \int_{\mathbb{P}^n}\log\Big(\frac{\sin \Big(\frac{ d(\zeta,\eta)}{\sqrt{2}}\Big)}{\sin\Big(\frac{d(\zeta^\prime,\eta)}{\sqrt{2}}\Big)}\Big)d\mu(\eta)\\
&=& \int_{K}\log\Big(\frac{\sin \Big(\frac{ d(\zeta,\eta)}{\sqrt{2}}\Big)}{\sin\Big(\frac{d(\zeta^\prime,\eta)}{\sqrt{2}}\Big)}\Big)d\mu(\eta)\\
&=&\int_{B(\zeta_0,r)\cap K}\log\Big(\frac{\sin \Big(\frac{ d(\zeta,\eta)}{\sqrt{2}}\Big)}{\sin\Big(\frac{d(\zeta^\prime,\eta)}{\sqrt{2}}\Big)}\Big)d\mu(\eta) +\\
&{}& \int_{K\setminus B(\zeta_0,r)}\log\Big(\frac{\sin \Big(\frac{ d(\zeta,\eta)}{\sqrt{2}}\Big)}{\sin\Big(\frac{d(\zeta^\prime,\eta)}{\sqrt{2}}\Big)}\Big)d\mu(\eta)\\
&\geq& -\epsilon\log(2)+\int_{K\setminus B(\zeta_0,r)}\log\Big(\frac{\sin \Big(\frac{ d(\zeta,\eta)}{\sqrt{2}}\Big)}{\sin\Big(\frac{d(\zeta^\prime,\eta)}{\sqrt{2}}\Big)}\Big)d\mu(\eta).
\end{eqnarray*}

Observe that for  $\zeta$ close to  $\zeta_0$ in $\mathbb{P}^n$, we can choose  the corresponding projection $\zeta^\prime$ close to $ \zeta_0$ in $K$, and hence 
$$\liminf_{\zeta \to \zeta_{0}}G_{\mu}(\zeta)\geq  \liminf\limits_{\underset{\zeta^{\prime}\in K}{\zeta^{\prime} \to \zeta_{0}}}G_{\mu}(\zeta^\prime)- \epsilon\log(2).$$
Since $\epsilon$ is arbitrary, the result follows.\\

2. If $G_\mu$ satisfies the promise of 2., then by part 1 of theorem we have,
$$
\liminf_{\zeta \to \zeta_{0}}G_{\mu}(\zeta)=G_{\mu}(\zeta_0).
$$ 
As $G_{\mu}$ is upper semicontinuous,  
$$
\limsup_{\zeta \to \zeta_{0}}G_{\mu}(\zeta)\leq G_{\mu}(\zeta_0).
$$
Combining these two results yields, 

 $$
\lim_{\zeta \to \zeta_{0}}G_{\mu}(\zeta)=G_{\mu}(\zeta_0),
$$
 as required.
\end{proof}
 \section{Projective logarithmic energy} 
  Let us define {\it the mutual (projective logarithmic) energy} of two Borel  measures $\mu_{1},\mu_{2} $ in $\text{Prob}  (\mathbb P^n)$ by
 \begin{eqnarray}
  I_G (\mu_{1},\mu_{2})&:=& \langle\mu_1,\mu_2\rangle \\
   & := &   - \int_{\mathbb P^n} {G}_{\mu_{1}} (\zeta) d \mu_{2} (\zeta)  \nonumber\\
  & =&   - \int_{\mathbb P^n}   \int_{\mathbb P^n} \log \left (\frac{\vert \zeta \wedge \eta\vert}{\vert \zeta\vert \vert \eta\vert}\right)  d \mu_{1} (\eta) \ d \mu_{2} (\zeta),
 \end{eqnarray}
 which is either a non-negative number or $+\infty$.\\
In case $ \mu_{1}=\mu_{2}=\mu$,  we call $ I_G (\mu)    := I_G (\mu,\mu)$ {\it the  projective logarithmic energy of $\mu $} i.e.
 \begin{eqnarray}
  I_G (\mu)   & := &  - \int_{\mathbb P^n} {G}_{\mu} (\zeta) d \mu (\zeta)  \nonumber\\
  & =&   - \int_{\mathbb P^n}   \int_{\mathbb P^n} \log \left (\frac{\vert \zeta \wedge \eta\vert}{\vert \zeta\vert \vert \eta\vert}\right)  d \mu (\eta) \ d \mu (\zeta) \geq 0.
 \end{eqnarray}
 It is clear that $I (\mu) \in [0 , + \infty]$. \\
It follows from the upper semi-continuity of the potentials that the energy is  upper semi-continuous i.e. 
\begin{pro}\label{pro:lower}
The operator $I : \mathcal M^+ (\mathbb P^n) \longrightarrow \R^+ \cup \{+ \infty\}$ is lower semi-continuous: If $(\mu_j)$ be a sequence weakly converging to $ \mu$ in  $\text{Prob} (\mathbb P^n)$. Then
$$
 I_G (\mu) \leq \liminf_{j \to + \infty} I_G (\mu_j).
 $$
\end{pro}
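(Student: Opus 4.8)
The cleanest route is to read the statement as a reformulation of the semicontinuity already obtained. Since $I_G(\mu)=-\int_{\mathbb P^n}G_\mu\,d\mu$, the inequality (\ref{eq:scont2}) of Proposition \ref{pro: semi-cont}, namely $\limsup_j\int_{\mathbb P^n}G_{\mu_j}\,d\mu_j\le\int_{\mathbb P^n}G_\mu\,d\mu$, becomes after multiplying by $-1$ exactly $I_G(\mu)\le\liminf_j I_G(\mu_j)$, so in principle one line suffices. Nevertheless I would prefer to record the standard potential-theoretic argument, which is robust and does not rely on applying Fatou's lemma to a varying sequence of measures.

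Set $K(\zeta,\eta):=-\log\big(|\zeta\wedge\eta|/(|\zeta|\,|\eta|)\big)$. Since $|\zeta\wedge\eta|\le|\zeta|\,|\eta|$, the function $K$ is nonnegative; it is lower semicontinuous on the compact space $\mathbb P^n\times\mathbb P^n$, taking the value $+\infty$ precisely on the diagonal. With this notation $I_G(\mu)=\int_{\mathbb P^n\times\mathbb P^n}K\,d(\mu\otimes\mu)$.

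First I would approximate $K$ from below by continuous functions: on a compact metric space a nonnegative lower semicontinuous function is the increasing pointwise limit of nonnegative continuous (hence bounded) functions $K_m\nearrow K$, for instance via the inf-convolution $K_m(x)=\inf_y\big(K(y)+m\,\rho(x,y)\big)$ with $\rho$ a metric on $\mathbb P^n\times\mathbb P^n$. Next I would note that weak convergence passes to products, $\mu_j\otimes\mu_j\to\mu\otimes\mu$ in $\text{Prob}(\mathbb P^n\times\mathbb P^n)$: this is immediate for tensors $f(\zeta)g(\eta)$, and such tensors span a dense subspace of $C(\mathbb P^n\times\mathbb P^n)$ by Stone--Weierstrass. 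Fixing $m$ and using $K_m\le K$ together with the continuity of $K_m$, I obtain
$$\int K_m\,d(\mu\otimes\mu)=\lim_{j}\int K_m\,d(\mu_j\otimes\mu_j)\le\liminf_{j}\int K\,d(\mu_j\otimes\mu_j)=\liminf_{j}I_G(\mu_j).$$
Letting $m\to\infty$ and invoking the monotone convergence theorem on the left gives $I_G(\mu)=\int K\,d(\mu\otimes\mu)\le\liminf_j I_G(\mu_j)$.

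The only genuinely delicate points are the lower approximation of $K$ by continuous functions while controlling the singularity $+\infty$ on the diagonal, which is exactly where monotone convergence is needed, and the product weak convergence; both are standard, so I expect no real obstacle beyond bookkeeping.
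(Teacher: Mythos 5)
Your proposal is correct, and its opening paragraph is in fact exactly the paper's proof: the paper dismisses the statement in one line as an immediate consequence of inequality (\ref{eq:scont2}) of Proposition \ref{pro: semi-cont}, i.e.\ multiplication by $-1$. The longer argument you then record is a genuinely different route, and it is worth comparing the two. The paper's route inherits whatever rigor Proposition \ref{pro: semi-cont} has, and there the proof of (\ref{eq:scont2}) reads ``integrating and applying Fatou's lemma again,'' which is delicate precisely for the reason you flag: the integrals $\int G_{\mu_j}\,d\mu_j$ involve a \emph{varying} sequence of measures, and the classical Fatou lemma does not literally apply to that situation without an additional argument. Your direct proof closes exactly this gap: writing $I_G(\mu)=\int_{\mathbb P^n\times\mathbb P^n}K\,d(\mu\otimes\mu)$ with $K=-G\geq 0$ lower semicontinuous, approximating $K$ from below by Lipschitz functions $K_m\nearrow K$ via inf-convolution, using that $\mu_j\otimes\mu_j\to\mu\otimes\mu$ weakly (tensors are dense in $C(\mathbb P^n\times\mathbb P^n)$ by Stone--Weierstrass, and all the measures have mass $1$), and concluding by monotone convergence. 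This is the standard principle that the energy attached to a nonnegative lower semicontinuous kernel is weakly lower semicontinuous. What each approach buys: the paper's is a one-liner conditional on Proposition \ref{pro: semi-cont}; yours is self-contained, strictly rigorous, and as a byproduct also furnishes a correct proof of (\ref{eq:scont2}) itself (take $\liminf$ in your chain and note $\int K_m\,d(\mu_j\otimes\mu_j)\leq\int K\,d(\mu_j\otimes\mu_j)$), so it would be the preferable argument to record.
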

\begin{proof}
Is an immediate consequence of the  semi-continuity property of $G$  proposition \ref{pro: semi-cont} and Fatou's lemma.
\end{proof}
\begin{defi}
We say that a positive Borel measure $\mu$ on $\mathbb P^n$ is a finite $G$-energy if $0\leq I_{G}(\mu)<+\infty$.
\end{defi}
In the sequel,   ${\mathcal {M}_G}^+ (\mathbb P^n)$ will denote the space of  positives Borel measures of finite energy.\\
 We now establish an important lemma which expresses
 polarization identity for the projective logarithmic energy:
\begin{lem}\label{lem:Fonda}
Let  $\mu,\nu \in {\mathcal {M}_G}^+ (\mathbb P^n)$ be a positive Borel measures of finite energy. Then   $0 \leq I_G (\mu,\nu) < + \infty$ and we have the following polarization identity:
 \begin{equation} \label{eq:polarisation}
  I_G (\mu + \nu) = I_G (\mu) + I_G (\nu) + 2 I_G(\mu ,\nu).
 \end{equation}
 In particular $\mu + \nu  \in {\mathcal{M}_G}^+ (\mathbb P^n)$.
\end{lem}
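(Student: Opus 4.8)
The plan is to read $I_G$ as the quadratic form of the symmetric bilinear pairing $\langle\cdot,\cdot\rangle$ attached to the kernel $k(\zeta,\eta):=-\log\frac{|\zeta\wedge\eta|}{|\zeta||\eta|}$, and to exploit two structural features of this kernel that are visible straight from the definition: it is \emph{non-negative}, since $|\zeta\wedge\eta|\le|\zeta||\eta|$ forces the logarithm to be $\le 0$, and it is \emph{symmetric}, since $|\zeta\wedge\eta|=|\eta\wedge\zeta|$. In particular $I_G(\mu,\nu)=\iint k\,d\mu\,d\nu\ge 0$ for every pair of positive measures, which already gives the lower bound $0\le I_G(\mu,\nu)$ asserted in the statement.

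Next I would establish the polarization identity as an identity in $[0,+\infty]$, with no finiteness input at all. Because the integrand $k$ is non-negative, Tonelli's theorem applies to every double integral in sight, so interchanging the order of integration and splitting the measure $\mu+\nu$ are unconditionally legitimate and no indeterminate $\infty-\infty$ can occur. Expanding $d(\mu+\nu)\otimes d(\mu+\nu)=d\mu\otimes d\mu+d\mu\otimes d\nu+d\nu\otimes d\mu+d\nu\otimes d\nu$ and using symmetry of $k$ to identify $I_G(\mu,\nu)=I_G(\nu,\mu)$ yields
\[
I_G(\mu+\nu)=I_G(\mu)+I_G(\nu)+2\,I_G(\mu,\nu)
\]
as an equality of elements of $[0,+\infty]$. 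The crucial point is that this step involves only sums of non-negative quantities, so it holds verbatim with no convergence hypothesis.

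Consequently the entire content of the lemma is reduced to the single finiteness assertion $I_G(\mu,\nu)<+\infty$: granting it, the displayed identity immediately gives $I_G(\mu+\nu)<+\infty$ (since $I_G(\mu)$ and $I_G(\nu)$ are finite by hypothesis), hence $\mu+\nu\in\mathcal{M}_G^+(\mathbb{P}^n)$. To obtain finiteness I would invoke the positive-definiteness of the projective logarithmic energy form for this kernel, established in \cite{As18, AAZ18}, i.e. the fact that $\langle\cdot,\cdot\rangle$ is an inner product on finite-energy measures. This provides the Cauchy--Schwarz inequality
\[
0\le I_G(\mu,\nu)\le \sqrt{I_G(\mu)}\,\sqrt{I_G(\nu)}<+\infty ,
\]
which is exactly the bound needed.

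The main obstacle is precisely this Cauchy--Schwarz step, and the delicate point is that the pairing can a priori equal $+\infty$, so the usual one-line derivation (expand $I_G(\mu-t\nu)\ge 0$ and optimize in $t$) is not directly valid: the expansion $I_G(\mu-t\nu)=I_G(\mu)-2t\,I_G(\mu,\nu)+t^2I_G(\nu)$ already presupposes the cross term to be finite, which is what we are trying to prove. I would circumvent this circularity by the standard energy-space approximation: replace $\mu,\nu$ by measures $\mu_\varepsilon,\nu_\varepsilon$ whose potentials $G_{\mu_\varepsilon},G_{\nu_\varepsilon}$ are bounded, so that the mutual energies $I_G(\mu_\varepsilon,\nu_\varepsilon)=\int(-G_{\mu_\varepsilon})\,d\nu_\varepsilon$ are manifestly finite; on these, Cauchy--Schwarz is a genuine inner-product inequality with all quantities finite, giving $I_G(\mu_\varepsilon,\nu_\varepsilon)^2\le I_G(\mu_\varepsilon)I_G(\nu_\varepsilon)$. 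Passing to the limit $\varepsilon\to 0$, controlling the self-energies on the right by lower semicontinuity (Proposition \ref{pro:lower}) and the mutual energy on the left by monotone convergence (using $k\ge 0$), delivers the inequality for $\mu,\nu$ themselves. Once $I_G(\mu,\nu)<+\infty$ is secured, the polarization identity of the second paragraph closes the proof, the non-negativity $I_G(\mu,\nu)\ge 0$ having already been recorded at the outset.
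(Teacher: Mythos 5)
There is a genuine gap at the heart of your argument, located exactly where you place the whole weight of the proof: the finiteness of the cross term. You invoke ``the positive-definiteness of the projective logarithmic energy form for this kernel, established in \cite{As18, AAZ18}'', but no such result exists there; the paper itself states explicitly that whether $I_G$ (equivalently, the affine kernel $\mathcal N$) is positive definite on signed measures of zero total mass and finite energy ``is not obvious and constitutes an interesting question which is still open'', and likewise that it is not known whether $\mathcal N$ satisfies the inequality $2 I_{\mathcal N}(\mu,\nu) \leq I_{\mathcal N}(\mu) + I_{\mathcal N}(\nu)$. Your fallback approximation does not repair this: the Cauchy--Schwarz inequality for a symmetric bilinear form is not a consequence of finiteness of the entries; it is equivalent to positive semi-definiteness of the quadratic form on differences, i.e.\ to $I_G(\mu_\e - t\nu_\e) \geq 0$ for all real $t$ --- which is precisely the open property. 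So even for regularized measures with bounded potentials, where every quantity is finite, you have no license to write $I_G(\mu_\e,\nu_\e)^2 \leq I_G(\mu_\e)\, I_G(\nu_\e)$; the argument is circular. (By contrast, your first two paragraphs are correct and in fact cleaner than the paper's: nonnegativity of the kernel is immediate, and establishing the polarization identity as an identity in $[0,+\infty]$ via Tonelli, with no finiteness hypothesis, is a nice way to reduce the entire lemma to the single claim $I_G(\mu,\nu) < +\infty$.)

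The paper proves that finiteness by a route designed precisely to avoid the open positive-definiteness question. It localizes to the affine charts $\mathcal U_j$ of $\mathbb P^n$ and compares the projective kernel $\mathcal N$ with the classical planar kernel $L(z,w) = \log |z-w|^2$, whose positive definiteness on compactly supported signed measures of zero total mass \emph{is} classical (Landkof \cite{La72}, obtained as a limit of Riesz kernels); this yields $2 I_L(\mu,\nu) \leq I_L(\mu) + I_L(\nu) < +\infty$ for compactly supported probability measures of finite energy. Since the difference $S := \mathcal N - L$ is locally bounded, the classes of compactly supported measures of finite energy for $L$ and for $\mathcal N$ coincide, and the two mutual energies differ by a term bounded in terms of the supports, whence $I_{\mathcal N}(\mu,\nu) < +\infty$ for measures supported in a single chart. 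Measures supported in different charts are then reduced to this case by a coordinate-swapping transformation, and general finite-energy measures by writing them as convex combinations of measures supported in single charts and using bilinearity. To salvage your write-up, keep your reduction (nonnegativity plus Tonelli polarization) and replace the Cauchy--Schwarz step by this localization-and-comparison argument, or by any other proof of the cross-term finiteness that does not presuppose positive definiteness of $G$.
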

\begin{proof}
The lemma would be an easy exercice of linear algebra if we know how to show that the kernel $G$ is positive definite i.e., the mutual energy associated $ I_G (\mu, \nu) $ is symmetric bilinear form, positive definite in the real vector space $\mathcal {M}^0_G (\mathbb P^n) $ of the signed Borel measures in $\mathbb P^n$ of zero total mass and finite energy. It is not obvious and constitutes an interesting question which is still open.\\
We will localize the problem and use the classical results of potential theory in $\R^{2 n}$. Recall first some necessary notations from (\cite{La72}, Chapter 1).\\
 Let $Q (z,w)$  be a kernel on $\C^n \times \C^n$ i.e. a locally  bounded (from) above  Borel function with values in $\R \cup \{- \infty \}$. We define the energy associated to $Q$ on $\mathcal M (\mathbb C^n) $ the set of signed Borel measures with compact support by the following formula: If $\mu, \nu \in \mathcal M (\mathbb C^n) $ are  compactly supported,  we put 
$$
 I_Q (\mu,\nu) := - \int_{\C^n } \int_{\C^n} Q (z,w) d \mu (z) d \nu (w).
$$
If $\mu = \nu$ we put $I_Q (\mu) := I_Q (\mu,\mu)$. By hypothesis, $I_Q (\mu,\nu)  \in \R \cup \{+ \infty\}$. We will say that $\mu$ is of finite energy (with respect to $Q$) if  $I_Q (\vert \mu \vert) < \infty.$\\
Note that $\mathcal M_{Q} (\mathbb C^n) $ is the set of signed Borel measures with compact support of finite energy and  $\mathcal M^0_{Q} (\mathbb C^n) $ the subset formed of which are zero total mass.\\
It is well known that the Riesz kernel $R_{\alpha}$ defined on $\C^n \simeq \R^{2 n}$ by
 $$
 K_{\alpha} (z,w) =R_{\alpha} (z,w) := - \vert z - w \vert^{\alpha - 2 n}, \, \, 0 < \alpha < 2 n,
 $$ 
 is a positive definite in the sens that in the mutual energy $I_{K_{\alpha}}$ associated to Riesz's kernel is
 scalar product on the space $\mathcal M_{K_{\alpha}} (\mathbb C^n) $ of signed Borel measures with compact support and finite energy.\\
 It follows that by letting $\alpha \to 2 n$ that the mutual energy $I_L$ associated to the logarithmic potential $L$ defined by
$$
 L (z,w) :=  \log \vert z- w \vert^2, \, \, (z,w) \in \C^n \times \C^n,
$$
 possess the same property  provided  to reduce on the space   of signed Borel measures with compact support, of zero total mass and of finite energy (see \cite{La72}, page 50).\\
More precisely, the mutual energy $I_L$ associated to the kernel $L$ on $\mathbb C^n$ defined for two signed measures 
$\mu, \nu $ of finite  energy by
$$
 I_L (\mu,\nu) :=  - \int_{\C^n \times \C^n} L (z,w) d \mu (z) d \nu (w),
$$ 
is positive definite on the vector space ${\mathcal M}_L^0 (\C^n)$   of the signed measures with zero mass on $\C^n \simeq \R^{2 n}$ with compact support of finite energy (see \cite{La72}).\\ 
It follows that: If $\mu$ and $\nu$ are the Probability measures with compact supports, then the signed measure (i.e. any difference of two non-negative Borel
measures  at least one of which is finite)  $\mu - \nu$ with compact support of zero mass total of finite energy, we have then 
$$
  0 \leq I_L (\mu - \nu) = I_L (\mu) + I_L (\nu) - 2 I_L (\mu,\nu).
 $$
Hence we deduce the  following fundamental inequality: If 
$\mu$ and $\nu$ are two Probability measures with compact supports in $\C^n$, we have 
\begin{equation} \label{eq:InegFond1}
 2 I_L (\mu,\nu) \leq I_L (\mu) + I_L (\nu). 
 \end{equation} 
 Consider the normalized projective logarithmic kernel studied recently in (\cite{As18, AAZ18}) and defined   for 
 $(z,w) \in \C^n \times \C^n$ as follows
  $$
  \mathcal N (z,w) :=  (1 \slash 2) \log \frac{\vert z - w\vert^2 + \vert z \wedge w\vert^2}{(1 + \vert z\vert^2) (1 + \vert w\vert^2)}.
 $$
 It is not known whether the kernel $\mathcal N$ possess the property (\ref {eq:InegFond1}).\\
 Observe that for $(z,w) \in \C^n \times \C^n$, the function 
 $$
 S (z,w) :=   \mathcal N (z,w) - L (z,w) = (1 \slash 2) \log \frac{\vert z - w\vert^2 + \vert z \wedge w\vert^2}{\vert z - w\vert^2 (1 + \vert z\vert^2) (1 + \vert w\vert^2)},
 $$  
 is a locally bounded negative function in $\C^n \times \C^n$. It follows that the Borel measures with compact support of finite energy are the same for the two kernel $L$ and  $\mathcal  N$ and that
 $$
I_{\mathcal N}  (\mu,\nu) = I_L (\mu, \nu) + I_S (\mu,\nu),
$$  
where $I_S (\mu,\nu)$ is a bounded function whose bound  depend only upon  the bound of  $S$ on the  support  of $\mu \otimes \nu$.
It follows that if $\mu$ and $\nu$ are two Probability measures with compact supports in  $\C^n$ of finite energy, $0 \leq I_L (\mu - \nu)  < + \infty $ 
and we have
\begin{eqnarray*}
   I_{ \mathcal N} (\mu - \nu) & = &  I_L (\mu - \nu) + I_S (\mu - \nu) \\
   & = &  I_L (\mu) + I_L (\nu) - 2 I_L (\mu,\nu) + I_S (\mu - \nu) \\
   & =&  I_{\mathcal N} (\mu) + I_{\mathcal N} (\nu) - 2 I_{\mathcal N} (\mu,\nu).
\end{eqnarray*}
Therefore we deduce that if $\mu$ and $\nu$ are two Probability measures with compact supports in $\C^n$, of finite energy, we have
$$
I_S (\mu - \nu)  \leq  I_{\mathcal N} (\mu) + I_{ N} (\nu) - 2 I_{ N} (\mu,\nu) . 
$$
In particular $I_{\mathcal N} (\mu,\nu) < + \infty.$\\
The first assertion of the lemma will follow namely, if $\mu, \nu \in \mathcal M_G (\mathbb P^n)$ are two  Probability measures in $\mathbb P^n$ of  finite energy, then 
$$
0 \leq I_G (\mu,\nu) < + \infty.
$$
Recall that $\mathbb P^n = \cup_{0 \leq j \leq n} \, \, \mathcal U_j$, where \, \, $ \mathcal U_j := \{ \zeta \in \mathbb P^n ; \zeta_j \neq 0\}$.\\
In each open $\mathcal U_j$, we have  the coordinates system  $z = \phi_j (\zeta) $ so that in  $\mathcal U_j \times \mathcal U_j$, we have
$$
 G(\zeta,\eta) = {\mathcal N} (z,w), \, \,  \mathrm{with} \, \, \,  z := \phi_j (\zeta), w := \phi_j (\eta).
 $$
1) Suppose now that $\mu$,  $\nu$ are supported in the same   domain of chart  $\mathcal U_j $. Then
$$
I_G (\mu,\nu) = I_{\mathcal N} (\tilde \mu,\tilde \nu), 
$$
 where $\tilde \mu = (\phi^j)_* \mu$ and $\tilde \nu = (\phi^j)_* \nu$  are two prabability measures with compact support in  $\C^n$ of finite energy with respect to $\mathcal N$.  Which proves  $0 \leq I_G (\mu,\nu) < + \infty$ according to the foregoing.\\
2) Suppose $\mu$ is supported on  $\mathcal U_j$ and $\nu$ is supported on $\mathcal U_k$ with $j \neq k$.  Consider the transformation $\psi_{j,k} : \mathcal U_k \longrightarrow \mathcal U_j$ which consists to switch the coordinates  $\zeta_j$ and  $\zeta_k$. Then, putting  $\hat \nu := (\psi_{j,k})_* \nu$, here we obtain a Probability measure with compact measure in  $\mathcal U_j$ such that 
$$
I_G (\mu,\nu) := I_G (\mu,\hat \nu) < + \infty,
$$
according to the first step.\\
3) If $\mu$  and  $\nu$ are two Probability measures in  $\mathbb P^n$ of finite energy, we can decompose  
$\mu = \sum_{j = 0}^n \alpha_j \mu_j$ and $\nu = \sum_{k = 0}^n \beta_k \nu_k$ into convex combination Probability measures with finite energy such that  $\mu_j $ and $\nu_j$ be supported in $\mathcal U_j$ for $j = 0, \cdots, n$.\\
It follows by bilinearity that
$$
 0 \leq I_G (\mu,\nu) = \sum_{j,k = 0}^n \alpha_j \beta_k I_G (\mu_j,\nu_k) < + \infty.
$$
The polarization identity is a consequence of the bilinearity and the symmetry of $I_G (\mu,\nu)$ in the space $\mathcal {M}_G (\mathbb P^n) $. Which finishes the proof of the lemma.  
 \end{proof}
 In the sequel we write  $I = I_G$ to simplify notations.
 \subsection{ Calculus the energy of some measures}
  In this subsection we will discuss about energy of two extreme examples of measures : Dirac measure and Fubini-Study measure.
  \begin{exa}
We first consider, Dirac measure  $\mu:=\delta_{a}$ at the point  $a\in \mathbb P^{n}$.  We have
 \begin{eqnarray*}
 I(\delta_{a})&:=&-\int_{\mathbb P^{n}}\int_{\mathbb P^{n}}\log \left (\frac{\vert \zeta \wedge \eta\vert}{\vert \zeta\vert \vert \eta\vert}\right)  d \delta_{a} (\zeta) \ d \delta_{a} (\eta)\\
 &=& -\log \left (\frac{\vert a \wedge a \vert}{\vert a \vert \vert a\vert}\right)\\
 &=&+\infty.
 \end{eqnarray*}
 More generally if  $a, b \in  \mathbb P^{n}$ such that $a \neq b$, we have 
 $$
 I(\delta_a , \delta_b) = -\log \left (\frac{\vert a \wedge b \vert}{\vert a \vert \vert b\vert}\right) < + \infty.
 $$
 Observe that polarization identity (\ref{eq:polarisation}) is valid for all Borel measures in $\mathbb P^n$. It follows that $I (\delta_a + \delta_b) = + \infty$.
 \end{exa}
 \begin{exa} \label{ex:FS} We now consider  Fubini$-$Study  measure 
  $\mu_{FS}:=\frac{ \omega^{n}_{FS}}{n!}=dV_{FS}$, we have  $0\leq d(\zeta,\eta)\leq \frac{\pi}{\sqrt{2}}$ for all $\zeta,\eta\in \mathbb{P}^{n}$.

  \begin{eqnarray*} 
 I(\mu_{FS}):&=&-\int_{\mathbb P^n}\Big(\int_{\mathbb P^n}\log\sin \frac{d(\zeta,\eta)}{\sqrt{2}}dV_{FS}(\eta)\Big)dV_{FS}(\zeta) \\
&=& -c_{n} \int_{0}^{\pi\over\sqrt{2}}\log\sin\bigl({r\over\sqrt{2}}\bigr)A(r)dr\\
 &=&-c_{n} \int_{0}^{\pi\over\sqrt{2}}\log\Bigl(\sin\bigl({r\over\sqrt{2}}\bigr)\Bigr)\sin^{2n-2}\bigl({r\over\sqrt{2}}\bigr)
 \sin(\sqrt{2}r)dr\\
  &=& -2\sqrt{2}c_{n} \int_{0}^{\pi\over 2}(\log\sin(t))\sin^{2n-1}(t)\cos(t)dt\\
 &=&-2\sqrt{2}c_{n} \int_{0}^{1}u^{2n-1}\log u du\\
 &=&2\sqrt{2}c_{n} \frac{1}{(2n)^2}\\
  &<&+\infty.
 \end{eqnarray*} 
where  $A(r):=c_{n}\sin^{2n-2}(r/\sqrt{2})\sin(\sqrt{2}r)$ is the area of the sphere of center $\eta$ and radius  $r$ of  $\mathbb{P}^{n}$ and $c_{n}$ a positive constant. 
\end{exa} 
 \subsection*{$\kappa-$Polar Set:} The $\kappa-$polar  set play the role of negligible sets in projective logarithmic potentials,  much as sets of measure zero do in potentials theory and measure theory. It follows from the projective logarithmic energy the following definition.
   \begin{defi}
 A subset $E$ of $\mathbb{P}^{n}$ is called $\kappa-$polar if $I(\mu)=+\infty$ for every Borel measure $\mu \in  \mathrm{Prob}(\mathbb P^n)$ for which $\mathrm{Supp} \mu \, \, \subset E$.
 \end{defi}
 \begin{exa}
 It follows from previous example that every finite or countable set of $\mathbb P^n$ is $\kappa$-polar.
  \end{exa} 
 The following proposition of [\cite{Ran95}] generalized to $\mathbb P^n$  shows in particular that the measures of finite energy do not charge $\kappa-$polar sets.
 \begin{pro} \label{pro: finiteEnergie}
 Let $\mu $ be a finite Borel measure in  $Prob(\mathbb P^n)$, and  suppose that $I(\mu)<+\infty$. Then $\mu(E)=0$ for every Borel $\kappa-$polar set $E$ of $\mathbb P^n$.
\end{pro}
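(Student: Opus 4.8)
The plan is to argue by contradiction. Suppose there were a Borel $\kappa$-polar set $E$ with $\mu(E) > 0$. The one genuine subtlety is that the definition of $\kappa$-polarity only constrains probability measures whose support is \emph{genuinely contained} in $E$, whereas the naive normalized restriction $\mu|_E / \mu(E)$ is only guaranteed to be supported in $\overline{E}$. I would circumvent this by descending to a compact subset of $E$.

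First I would invoke inner regularity: since $\mathbb{P}^n$ is a compact metric space, every finite Borel measure on it is Radon, so $\mu(E) = \sup\{\mu(K) : K \subset E \text{ compact}\}$. As $\mu(E) > 0$, I can choose a compact set $K \subset E$ with $\mu(K) > 0$ and put $\nu := \mu|_K / \mu(K)$. Because $K$ is closed and carries the full mass of $\mu|_K$, we have $\text{Supp}\, \nu \subset K \subset E$, so $\nu$ is an admissible competitor in the definition of $\kappa$-polarity.

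Next I would estimate $I(\nu)$, and here the decisive structural fact is the sign of the kernel: since $\vert \zeta \wedge \eta \vert \leq \vert \zeta \vert \vert \eta \vert$, one has $\log\left(\frac{\vert \zeta \wedge \eta\vert}{\vert \zeta\vert \vert \eta\vert}\right) \leq 0$, so the density $-\log\left(\frac{\vert \zeta \wedge \eta\vert}{\vert \zeta\vert \vert \eta\vert}\right)$ appearing in $I$ is non-negative. Consequently, restricting the double integral from $\mathbb{P}^n \times \mathbb{P}^n$ down to $K \times K$ can only decrease it, which yields
$$
I(\nu) = \frac{1}{\mu(K)^2} \int_K \int_K \left(-\log\frac{\vert \zeta \wedge \eta\vert}{\vert \zeta\vert \vert \eta\vert}\right) d\mu(\eta)\, d\mu(\zeta) \leq \frac{I(\mu)}{\mu(K)^2} < +\infty.
$$

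This contradicts the fact that $E$ is $\kappa$-polar, which forces $I(\nu) = +\infty$ for every probability measure $\nu$ supported in $E$; hence $\mu(E) = 0$. The only non-routine step is the reduction to a compact subset via inner regularity, needed to place the support safely inside $E$; once that is done, the finite-energy bound is immediate from the non-positivity of the kernel together with monotonicity of the integral, with no appeal to the finer potential-theoretic machinery of Section~3.
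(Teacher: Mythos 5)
Your proof is correct and follows essentially the same route as the paper: pass to a compact $K \subset E$ with $\mu(K)>0$, normalize $\nu = \mathbf{1}_K \mu / \mu(K)$, and bound $I(\nu) \leq I(\mu)/\mu(K)^2 < +\infty$ using the non-positivity of the kernel, contradicting $\kappa$-polarity. You are merely more explicit than the paper about the inner-regularity step and about why $\mathrm{Supp}\,\nu \subset E$, which is a welcome clarification but not a different argument.
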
  
\begin{proof}
  Suppose $E $ be a Borel set of $\mathbb P^n$  such that  $\mu(E)>0$. There exists a compact subset $K\subset E$ such that $\mu(K)>0$. Set $\nu=1_{K}\frac{\mu}{\mu(K)}$. Then $\nu$ is a Probability measure whose support in $ E$ and its energy checks:
 \begin{eqnarray}
   I(\nu)
  &=& -  (\mu (K))^{- 2} \int_{K}   \int_{K} \log \left (\frac{\vert \zeta \wedge \eta\vert}{\vert \zeta\vert \vert \eta\vert}\right)  d \mu (\zeta) \ d \mu (\eta)\nonumber\\
  &=&- (\mu (K))^{- 2} \int_{ K} \int_{ K} \log \sin\Big(\frac{d(\zeta,\eta)}{\sqrt{2}}\Big)d\mu(\eta)d\mu(\zeta)  \nonumber\\
  &\leq &- (\mu (K))^{- 2} \int_{\mathbb P^n } \int_{\mathbb P^n} \log \sin\Big(\frac{d(\zeta,\eta)}{\sqrt{2}}\Big)d\mu(\eta)d\mu(\zeta)\nonumber\\
  &=& (\mu (K))^{- 2}  I(\mu) < + \infty\nonumber.
  \end{eqnarray}
  It follows that $E$ is not $\kappa-$polar. 
  \end{proof}
 \section{Projective logarithmic  capacity and proof of Theorem A} 
 Recall the definition of the projective logarithmic energy of $\mu $ in $\text{Prob}  (\mathbb P^n)$ from the previous section, 
 \begin{eqnarray*}
  I (\mu) & := &  - \int_{\mathbb P^n} \mathbb{G}_{\mu} (\zeta) d \mu (\zeta) \\
  & =&   - \int_{\mathbb P^n}   \int_{\mathbb P^n} \log \left (\frac{\vert \zeta \wedge \eta\vert}{\vert \zeta\vert \vert \eta\vert}\right)  d \mu (\zeta) \ d \mu (\eta) \in [0,+\infty].
 \end{eqnarray*}
 We define for a subset $E \subset \mathbb P^n$, the Robin constant  
 $$
 \gamma (E) := \inf \{I (\mu) ; \mu \in \text{Prob}  (\mathbb P^n), \mrm{ Supp} \mu \subset E \},
  $$
  and its projective logarithmic capacity as 
  $$
 \kappa (E) := e^{- \gamma (E)}.
  $$
  Observe that $\kappa (E) > 0$ if and only if there exists $\mu \in \text{Prob}  (\mathbb P^n), Supp \mu \subset E$ such that $I (\mu) < + \infty$.\\
  By duality, we obtain the following:
  \begin{pro} \label{pro:dualité} For all Borel $E \subset \mathbb P^n$, we have
  \begin{equation} \label{eq:dualite}
  \frac{1}{ \sqrt{- \log \kappa  (E)} }= \sup \{\nu (E) ; \nu \in \mathcal M^+ (\mathbb{P}^n), \mrm{ Supp} \nu \subset E,   I (\nu) \leq 1 \}.
  \end{equation}
  In particular,  $E$ is $\kappa$-polar if and only if   $\nu (E) = 0$ for all Borel measure  $ \nu \in \mathcal M^+ (\mathbb{P}^n)$ such that $I (\nu) < + \infty$. 
  \end{pro}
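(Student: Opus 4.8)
The plan is to exploit the fact that the energy $I$ is homogeneous of degree two: since $I(\nu)=-\int_{\mathbb P^n}\int_{\mathbb P^n} \log\big(\tfrac{|\zeta\wedge\eta|}{|\zeta||\eta|}\big)\,d\nu(\zeta)\,d\nu(\eta)$ is a quadratic form in $\nu$, one has $I(t\nu)=t^2 I(\nu)$ for every $t>0$ and every $\nu\in\mathcal M^+(\mathbb P^n)$. Writing $\gamma(E)=-\log\kappa(E)=\inf\{I(\mu): \mu\in\mathrm{Prob}(\mathbb P^n),\ \mathrm{Supp}\,\mu\subset E\}$, the identity to prove reads $\gamma(E)^{-1/2}=\sup\{\nu(E): \mathrm{Supp}\,\nu\subset E,\ I(\nu)\le 1\}$, and the scaling is exactly what lets me pass freely between probability measures (used to define $\gamma$) and the normalization $I(\nu)\le 1$ (used on the right).

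For the inequality $\ge$, I start from any $\mu\in\mathrm{Prob}(\mathbb P^n)$ with $\mathrm{Supp}\,\mu\subset E$ and $I(\mu)<+\infty$ and set $\nu:=\mu/\sqrt{I(\mu)}$. Then $\mathrm{Supp}\,\nu\subset E$, homogeneity gives $I(\nu)=I(\mu)/I(\mu)=1$, and $\nu(E)=\mu(E)/\sqrt{I(\mu)}=1/\sqrt{I(\mu)}$, using that $\mu(E)=\mu(\mathrm{Supp}\,\mu)=1$. Hence the right-hand supremum is at least $1/\sqrt{I(\mu)}$ for every admissible $\mu$; taking the infimum over such $\mu$ and invoking $\gamma(E)=\inf_\mu I(\mu)$ gives $\mathrm{RHS}\ge \gamma(E)^{-1/2}$. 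For the reverse inequality I take any $\nu\in\mathcal M^+(\mathbb P^n)$ with $\mathrm{Supp}\,\nu\subset E$, $I(\nu)\le 1$ and $m:=\nu(E)=\nu(\mathbb P^n)>0$, and normalize to $\mu:=\nu/m\in\mathrm{Prob}(\mathbb P^n)$, again supported in $E$. Homogeneity yields $I(\mu)=I(\nu)/m^2\le 1/m^2$, so $\gamma(E)\le I(\mu)\le 1/m^2$, whence $m\le\gamma(E)^{-1/2}$; taking the supremum over $\nu$ gives $\mathrm{RHS}\le\gamma(E)^{-1/2}$, and the two bounds combine to the claimed equality. The degenerate case $\gamma(E)=+\infty$ is consistent: the left side is $0$, and the same normalization forces $\nu(E)=0$ for every admissible $\nu$, since otherwise $\nu/\nu(E)$ would be a probability measure on $E$ of finite energy.

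For the ``in particular'' assertion, $E$ is $\kappa$-polar iff $\kappa(E)=0$ iff $\gamma(E)=+\infty$ iff the right-hand supremum vanishes, i.e. every $\nu\in\mathcal M^+(\mathbb P^n)$ supported in $E$ with $I(\nu)\le 1$ has $\nu(E)=0$; by homogeneity the constraint $I(\nu)\le 1$ may be replaced by $I(\nu)<+\infty$ without changing this conclusion. To obtain the stated form for an arbitrary $\nu\in\mathcal M^+(\mathbb P^n)$ of finite energy, I would restrict $\nu$ to $E$: because the kernel satisfies $-\log\big(\tfrac{|\zeta\wedge\eta|}{|\zeta||\eta|}\big)\ge 0$, integrating over the smaller set gives $I(1_E\,\nu)\le I(\nu)<+\infty$, so if $\nu(E)>0$ then $1_E\,\nu$ is a nonzero finite-energy measure concentrated on $E$, contradicting $\kappa(E)=0$; the forward implication is precisely Proposition \ref{pro: finiteEnergie}. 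The argument is essentially formal, resting entirely on the quadratic scaling of $I$; the only points demanding a little care are the bookkeeping in the polar case $\gamma(E)=+\infty$ and the passage from ``support in $E$'' to ``concentrated on $E$'' via the restriction $1_E\,\nu$, which is where I expect the only genuine (if minor) subtlety to lie.
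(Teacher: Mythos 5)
You have reproduced essentially the paper's own proof: both directions rest on the quadratic homogeneity of $I$, normalizing a probability measure $\mu$ supported in $E$ to produce an admissible $\nu$ with $I(\nu)\leq 1$, and conversely normalizing an admissible $\nu$ by its total mass $\nu(E)=\nu(\mathbb{P}^n)$ to produce a competitor for $\gamma(E)$, with the ``in particular'' claim handled through Proposition \ref{pro: finiteEnergie} exactly as the paper intends. The only discrepancy is that the paper divides by $\sqrt{I(\mu)+\varepsilon}$ and lets $\varepsilon\to 0$, which avoids the division by zero that your $\mu/\sqrt{I(\mu)}$ incurs when $I(\mu)=0$ --- a case you never rule out, although your own scaling observation patches it in one line (if $I(\mu)=0$ then $t\mu$ is admissible for every $t>0$, so both sides of the identity equal $+\infty$); likewise, the support-versus-concentration subtlety you flag for non-closed $E$ is resolved by inner regularity (passing to a compact $K\subset E$ with $\nu(K)>0$), which is precisely the argument already used in the paper's proof of Proposition \ref{pro: finiteEnergie}.
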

  \begin{proof}  By definition, we have 
  $$
  \sqrt{- \log \kappa (E)} :=\inf  \{ \sqrt{ I (\mu) } ; \mu \in \text{Prob}  (\mathbb P^n), \mrm{ Supp} \mu \subset E\}.
  $$
We denote by $\sigma (E)$ the second  member of the equality (\ref{eq:dualite}).  Let $\mu \in \text{Prob}  (\mathbb P^n), \mrm{ Supp} \mu \subset E $ and $I (\mu) < +\infty$. Then for all $\varepsilon > 0$ small enough, $\nu := \mu \slash \sqrt{I (\mu) + \varepsilon} \in \mathcal M^+ (\mathbb{P}^n),$ and checks $ I (\nu) \leq 1$. We infer that $\nu (E) \leq \sigma (E)$. Since $\mu (E) = 1$, it follows that 
 $$ 1 \slash \sqrt{I (\mu) + \varepsilon} =  \mu (E)\slash \sqrt{I (\mu) + \varepsilon} \leq \sigma (E).$$
 Taking the upper bound on $\mu$ and tends $\varepsilon $ to $0$,  yields the first inequality in  (\ref{eq:dualite}).
  Conversely, let $\nu \in \mathcal M^+ (\mathbb{P}^n),  I (\nu) \leq 1 $ such that $\nu (E) > 0$. Then $\mu := {\bf 1}_E \nu \slash \nu (E)$ is a Probability measure with compact support in  $E$ such that  $I (\mu) \leq I (\nu) \slash (\nu (E))^2 \leq 1 \slash (\nu (E))^2$. This ensures that $\gamma (E) \leq  1 \slash (\nu (E))^2$.
  Therefore $\nu (E) \leq 1 \slash \sqrt{\gamma (E)}$.
Taking the supremum over  $\nu$, we obtain the second required inequality  $\geq$ in (\ref{eq:dualite}).
  \end{proof}
   As a consequence we state the following results:
 \begin{pro} The set function  $E \longmapsto  \frac{1}{ \sqrt{- \log \kappa  (E)} }$ is subadditive on the $\sigma-$Algebra of Borel subsets   $\mathcal B (\mathbb P^n)$  of $\mathbb P^n$ i.e., if $(E_j)$ is a sequence of Borel sets of $\mathbb P^n$ and $E := \cup_j E_j$, then
  $$
  \frac{1}{\sqrt{- \log \kappa  (E)} } \leq \sum_{j = 0}^{+ \infty}  \frac{1}{ \sqrt{- \log \kappa (E_j)}}.
 $$
 \end{pro}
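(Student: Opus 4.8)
The plan is to read the statement off the dual description (\ref{eq:dualite}) of Proposition \ref{pro:dualité}, which expresses the set function $c(E) := 1/\sqrt{-\log \kappa(E)}$ as a supremum of masses of admissible measures. Write $E = \cup_j E_j$ and fix an arbitrary $\nu \in \mathcal M^+(\mathbb P^n)$ with $\text{Supp}\,\nu \subset E$ and $I(\nu) \leq 1$. Since taking the supremum over all such $\nu$ recovers $c(E)$, it suffices to prove the single numerical inequality $\nu(E) \leq \sum_j c(E_j)$.

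The first move is to disjointify the cover. Put $F_0 := E_0$ and $F_j := E_j \setminus \cup_{i<j} E_i$, so that the $F_j$ are pairwise disjoint Borel sets with $F_j \subset E_j$ and $\cup_j F_j = E$. Setting $\nu_j := {\bf 1}_{F_j}\nu$, we get $\nu = \sum_j \nu_j$, and because $\nu$ is carried by $E$ we have $\nu(E) = \sum_j \nu_j(\mathbb P^n) = \sum_j \nu_j(E_j)$. The heart of the argument is then to control the total energy. By Lemma \ref{lem:Fonda}, $I_G$ is a symmetric bilinear form on measures of finite energy and the mutual energy is non-negative; expanding by bilinearity (all terms being non-negative, the rearrangement over the countable family is legitimate) gives
$$
I(\nu) = \sum_{j,k} I_G(\nu_j,\nu_k) \;\geq\; \sum_j I_G(\nu_j,\nu_j) = \sum_j I(\nu_j).
$$
Together with $I(\nu) \leq 1$ this yields $\sum_j I(\nu_j) \leq 1$, and in particular $I(\nu_j) \leq 1$ for every $j$.

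Next I would compare each piece with $c(E_j)$. For each $j$ with $I(\nu_j)>0$, the normalized measure $\nu_j/\sqrt{I(\nu_j)}$ has energy $1$ and is admissible for $E_j$, so (\ref{eq:dualite}) gives $\nu_j(E_j)/\sqrt{I(\nu_j)} \leq c(E_j)$, whence $\nu_j(E_j) \leq c(E_j)\sqrt{I(\nu_j)} \leq c(E_j)$ using $I(\nu_j)\leq 1$. (If $I(\nu_j)=0$ with $\nu_j \neq 0$, then $t\nu_j$ is admissible for every $t>0$, forcing $c(E_j)=+\infty$ and making the bound trivial.) Summing over $j$,
$$
\nu(E) = \sum_j \nu_j(E_j) \leq \sum_j c(E_j),
$$
and taking the supremum over $\nu$ completes the proof.

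The main obstacle is a technical one hidden in the previous step: admissibility in (\ref{eq:dualite}) demands $\text{Supp}\,\nu_j \subset E_j$, whereas $\nu_j$ is only \emph{concentrated} on $F_j \subset E_j$, so its closed support may protrude from $E_j$ when $E_j$ is not closed. I would dispose of this by inner regularity of the finite measure $\nu_j$: choose compacts $K_{j,m} \nearrow$ inside $F_j$ with $\nu_j(F_j \setminus K_{j,m}) \to 0$, apply the dual formula to the truncations ${\bf 1}_{K_{j,m}}\nu_j$ (whose support now genuinely lies in $E_j$), and let $m \to \infty$. Apart from this bookkeeping, the only conceptual input is the non-negativity of the mutual energy furnished by Lemma \ref{lem:Fonda}, which is precisely what forces the cross terms to drop out and makes the energy estimate work.
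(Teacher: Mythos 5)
Your proof is correct and is essentially the paper's own argument: the paper disposes of this proposition in one line, as ``a consequence of the formula of duality and by subadditivity of Borel measures,'' and your write-up is a careful fleshing-out of exactly that sketch, using the dual description (\ref{eq:dualite}) and a decomposition of an admissible measure across the sets $E_j$. The details you supply---disjointification, discarding the non-negative cross energies (which amounts to the simpler remark that restricting a measure cannot increase its energy, the kernel $-G$ being non-negative), and the inner-regularity fix for the support condition---are precisely what the paper's one-liner leaves implicit.
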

  \begin{proof} A consequence of the formula of duality and by subadditivity of Borel measures on $\mathbb P^n$.
   \end{proof}
 \begin{cor} For all Borel $E \subset \mathbb P^n$, we have
 $$
 \mathrm{Vol}_{FS} (E) \leq \frac{a_n}{\sqrt{- \log \kappa  (E)}},
 $$
 where $a_n > 0$ is an absolute constant.
\end{cor}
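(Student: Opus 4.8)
The plan is to read off the inequality directly from the duality formula of Proposition~\ref{pro:dualité}, using the Fubini--Study measure as the test object. The right-hand side of the Corollary is, up to the constant $a_n$, exactly the quantity $\frac{1}{\sqrt{-\log\kappa(E)}}$, which Proposition~\ref{pro:dualité} expresses as a supremum of $\nu(E)$ over measures $\nu\in\mathcal M^+(\mathbb P^n)$ with $\mathrm{Supp}\,\nu\subset E$ and $I(\nu)\le 1$. So to bound $\mathrm{Vol}_{FS}(E)$ from above by a multiple of this supremum, I only need to exhibit \emph{one} admissible competitor $\nu$ whose mass $\nu(E)$ is comparable to $\mathrm{Vol}_{FS}(E)$. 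The natural candidate is (a normalization of) the restriction of the Fubini--Study volume to $E$, and the whole point is that, by Example~\ref{ex:FS}, the Fubini--Study measure has finite energy $C_n:=I(\mu_{FS})=2\sqrt 2\,c_n/(2n)^2<+\infty$.

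To keep the support condition clean, I would first work with compact subsets $K\subset E$ and set $\nu_K:=\mathbf 1_K\,dV_{FS}$, so that $\mathrm{Supp}\,\nu_K\subset K\subset E$ and $\nu_K(E)=\mathrm{Vol}_{FS}(K)$. The key estimate is the monotonicity of the energy under restriction of the domain: since $|\zeta\wedge\eta|\le|\zeta|\,|\eta|$, the integrand $-\log\bigl(|\zeta\wedge\eta|/(|\zeta|\,|\eta|)\bigr)=-\log\sin\bigl(d(\zeta,\eta)/\sqrt2\bigr)$ is nonnegative, and hence restricting the double integral from $\mathbb P^n\times\mathbb P^n$ down to $K\times K$ can only decrease it:
$$
I(\nu_K)=-\int_{K}\int_{K}\log\sin\Bigl(\tfrac{d(\zeta,\eta)}{\sqrt2}\Bigr)\,dV_{FS}(\eta)\,dV_{FS}(\zeta)\le I(\mu_{FS})=C_n<+\infty .
$$
Because the energy is quadratic, $I(t\nu)=t^2 I(\nu)$, the rescaled measure $\tilde\nu_K:=\nu_K/\sqrt{C_n}$ then satisfies $I(\tilde\nu_K)\le 1$ while still having $\mathrm{Supp}\,\tilde\nu_K\subset E$, so it is admissible in the supremum of Proposition~\ref{pro:dualité}.

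Feeding $\tilde\nu_K$ into the duality formula gives
$$
\frac{1}{\sqrt{-\log\kappa(E)}}\ \ge\ \tilde\nu_K(E)\ =\ \frac{\mathrm{Vol}_{FS}(K)}{\sqrt{C_n}} ,
$$
and taking the supremum over compact $K\subset E$, together with the inner regularity of the finite Borel (Radon) measure $dV_{FS}$ on the compact space $\mathbb P^n$, yields $\sup_K \mathrm{Vol}_{FS}(K)=\mathrm{Vol}_{FS}(E)$. This produces the claimed inequality with the explicit absolute constant $a_n=\sqrt{C_n}=\sqrt{I(\mu_{FS})}=\sqrt{2\sqrt2\,c_n}/(2n)$. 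I expect the only genuinely delicate points to be the two bookkeeping issues just handled: ensuring the support constraint $\mathrm{Supp}\,\nu\subset E$ is met for a possibly non-closed Borel set $E$ (resolved by the compact exhaustion plus inner regularity), and justifying the energy monotonicity under restriction (resolved by the nonpositivity of the logarithmic kernel). Everything else is a direct application of the already-established duality and of the finiteness computed in Example~\ref{ex:FS}.
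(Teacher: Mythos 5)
Your proof is correct, and it rests on the same two pillars as the paper's own argument: the finiteness of $I(\mu_{FS})$ from Example \ref{ex:FS} and the duality formula of Proposition \ref{pro:dualité}. The difference is in how the duality is invoked. The paper simply takes $\nu := \mathrm{Vol}_{FS}/\sqrt{b_n}$ --- the \emph{unrestricted} Fubini--Study volume, whose support is all of $\mathbb P^n$ --- and quotes the proposition to conclude $\nu(E)\le 1/\sqrt{-\log\kappa(E)}$; strictly speaking this competitor violates the hypothesis $\mathrm{Supp}\,\nu\subset E$ in the statement of Proposition \ref{pro:dualité}, although the converse half of that proposition's proof (restricting $\nu$ to $E$ and renormalizing) actually establishes the inequality for \emph{every} $\nu\in\mathcal M^+(\mathbb P^n)$ with $I(\nu)\le 1$, with no support condition at all. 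Your route makes the competitor honestly admissible instead: you restrict $dV_{FS}$ to a compact $K\subset E$, use the nonnegativity of the kernel $-G$ to see that restriction can only lower the energy, rescale by $\sqrt{C_n}$, apply the duality exactly as stated, and then recover $\mathrm{Vol}_{FS}(E)$ by inner regularity of the Radon measure $dV_{FS}$. This costs two extra (easy) steps --- the energy monotonicity and the compact exhaustion --- but it buys an argument that conforms literally to the proposition, in effect inlining the restriction trick that the paper leaves buried inside the proof of Proposition \ref{pro:dualité}. Both arguments produce the same constant, $a_n=\sqrt{I(\mu_{FS})}$.
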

 \begin{proof} It follows from  Example  \ref{ex:FS} that  there exists a constant  $b_n > 0$ such that $ I (\mathrm{Vol}) \leq b_n$. It follows that $\nu := \mathrm{Vol}_{FS} \slash \sqrt{b_n} \in  \mathcal M^+ (\mathbb{P}^n)$ and checks $  I (\nu) \leq 1$. It follows from proposition \ref{pro:dualité} that $\nu (E) \leq \frac{1}{ \sqrt{- \log \kappa  (E)} }$, which yields the desired inequality with $a_n := \sqrt{b_n}$.
 \end{proof}
   \begin{lem}\label{lem: bounded}
There exists a constant $s_n > 0$ such that if  $u: \mathbb{P}^{n}\longrightarrow [-\infty, +\infty[ $ is  $\omega$-plurisubharmonic on $\mathbb{P}^{n}$  suth that $- 1 \leq u \leq 0$, the measure $\mu =(\omega+dd^{c}u)^{n}$ verifies   $I(\mu_u) \leq s_n$.
 \end{lem}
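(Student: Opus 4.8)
The plan is to exploit the global pluripotential structure of the potential rather than the local comparison of Lemma \ref{lem:Fonda}. The two facts I would import from \cite{As18, AAZ18} are that for every $\mu \in \text{Prob}(\PP)$ the potential $G_\mu$ is $\omega$-plurisubharmonic with $\theta_\mu := \omega + dd^c G_\mu \geq 0$ a positive closed $(1,1)$-current in the class $[\omega]$, and that $G_\mu \leq 0$ everywhere (since $\log\sin \leq 0$). Writing $v := G_{\mu_u}$ and $\omega_u := \omega + dd^c u$, and normalizing $\omega$ so that $\int_{\PP}\omega^n = 1$ (so that $\mu_u$ is a probability measure), the quantity to bound is $I(\mu_u) = \int_{\PP}(-v)\,\omega_u^n$, an integral of the nonnegative function $-v$ against the probability measure $\omega_u^n$.

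First I would split off the Fubini--Study part. Using the factorization $\omega_u^n - \omega^n = dd^c u \we T$ with $T := \sum_{j=0}^{n-1}\omega_u^j \we \omega^{n-1-j}$ a positive closed $(n-1,n-1)$-current of class $n[\omega]^{n-1}$, one has $I(\mu_u) = \int_{\PP}(-v)\,\omega^n + \int_{\PP}(-v)\,dd^c u \we T$. For the first term, Tonelli's theorem together with the symmetry $G(\zeta,\eta) = G(\eta,\zeta)$ gives $\int_{\PP}(-v)\,\omega^n = -\int_{\PP} G_{\mu_{FS}}\,d\mu_u$, where $\omega^n$ is identified with the Fubini--Study probability measure $\mu_{FS}$. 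Since $\mu_{FS}$ and the geodesic distance $d$ are invariant under the transitive action of the isometry group of $(\PP,\omega_{FS})$, the function $G_{\mu_{FS}}$ is constant, hence equal to $-I(\mu_{FS})$ by integrating against $\mu_{FS}$ and using Example \ref{ex:FS}; thus this term equals exactly $I(\mu_{FS}) < +\infty$.

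For the second term I would integrate by parts and use positivity. Formally $\int_{\PP}(-v)\,dd^c u \we T = \int_{\PP}(-u)\,dd^c v \we T = \int_{\PP}(-u)(\theta_\mu - \omega)\we T$; since $0 \leq -u \leq 1$ and $\theta_\mu, \omega, T$ are all positive, this is at most $\int_{\PP}\theta_\mu \we T = [\omega]\smile n[\omega]^{n-1} = n$, the evaluation being purely cohomological because every current involved is closed and positive with bounded local potentials. Combining the two estimates yields $I(\mu_u) \leq I(\mu_{FS}) + n =: s_n$, a constant depending only on $n$.

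The one genuine obstacle is that $v = G_{\mu_u}$ need not be bounded, so the integration by parts is not immediately licensed. I would resolve this by truncation: replace $v$ by the bounded $\omega$-plurisubharmonic functions $v_k := \max(v,-k)$, for which the Bedford--Taylor integration by parts and the cohomological identity $\int_{\PP}(\omega + dd^c v_k)\we T = n$ are standard. This gives, uniformly in $k$, the bound $\int_{\PP}(-v_k)\,\omega_u^n \leq \int_{\PP}(-v_k)\,\omega^n + n \leq I(\mu_{FS}) + n$, where the constant term is controlled using $-v_k \leq -v$. Letting $k \to +\infty$ and invoking monotone convergence (as $-v_k \uparrow -v \geq 0$) recovers $I(\mu_u) \leq I(\mu_{FS}) + n$ with no a priori integrability assumption on $v$, completing the argument with $s_n := I(\mu_{FS}) + n$.
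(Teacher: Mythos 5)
Your proof is correct, and it shares the paper's basic skeleton --- split off the $\omega^n$ part, integrate by parts to throw $dd^c$ onto the potential $G_{\mu_u}$, and exploit $0\le -u\le 1$ together with positivity of $\omega+dd^c G_{\mu_u}$ --- but the execution is genuinely different and in two respects better. The paper expands $(\omega+dd^cu)^n$ binomially in $dd^cu$ and $\omega$, integrates by parts, re-expands each term in the positive currents $\omega+dd^cu$ and $\omega+dd^cG_\mu$, and then bounds the resulting double sum by Chern--Levine--Nirenberg inequalities (with a uniform bound on $\|G_\mu\|_{L^1}$ playing the role your invariance argument plays); your telescoping identity $\omega_u^n-\omega^n=dd^cu\we T$ with $T=\sum_{j}\omega_u^j\we\omega^{n-1-j}$ collapses all that combinatorics into a single integration by parts, and replaces CLN by the exact cohomological evaluation $\int_{\PP}(\omega+dd^cv_k)\we T=n$, yielding the clean explicit constant $s_n=I(\mu_{FS})+n$. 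More importantly, your truncation $v_k=\max(G_{\mu_u},-k)$ repairs a point the paper glosses over: $G_{\mu_u}$ need not be bounded, so the paper's Stokes step and its wedge products involving $dd^cG_\mu$ are not licensed by Bedford--Taylor theory as written; doing everything with the bounded potentials $v_k$ and letting $k\to\infty$ by monotone convergence is exactly the missing justification. Note finally that your identification of $\int_{\PP}(-G_{\mu_u})\,\omega^n$ with $I(\mu_{FS})$ via isometry invariance of the Fubini--Study measure is also used by the paper, though only implicitly: it is the unexplained constant $\alpha_n$ appearing in the first term of its computation.
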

\begin{proof}  Let $\mu =(\omega+dd^{c}u)^{n} $ with  $u\in PSH(\mathbb{P}^{n},\omega)\cap L^{\infty}(\mathbb{P}^{n})$. We have then 
\begin{eqnarray*}
I(\mu)&=&-\int_{\mathbb{P}^n}{G}_{\mu}(\omega+dd^{c}u)^{n}\\
&=&-\sum_{j=0}^n C^j_n\int_{\mathbb {P}^n} {G}_{\mu}(dd^{c}u)^{j}\wedge \omega^{n-j}\\
&=&-\sum_{j=0}^n C^j_n\int_{\mathbb{P}^n} {G}_{\mu}dd^{c}u\wedge(dd^{c}u)^{j-1}\wedge \omega^{n-j}\\
&=& \alpha_n n! \mu(\mathbb P^n)-\sum_{j=1}^n C^j_n\int_{\mathbb{P}^n} {G}_{\mu}dd^{c}u\wedge(dd^{c}u)^{j-1}\wedge \omega^{n-j}.
\end{eqnarray*}
Integrating by parts  thanks to Stokes formula, we infer
\begin{eqnarray*}
I (\mu) &=&\alpha_n n! \int_{\mathbb P^n}\omega^n-\sum_{j=1}^nC^j_n \int_{\mathbb{P}^n}udd^{c} {G}_{\mu}\wedge(dd^{c}u)^{j-1}\wedge \omega^{n-j}\\
&=& \alpha_n n! \int_{\mathbb P^n}\omega^n  -\sum_{j=1}^n C^j_n\int_{\mathbb{P}^n} u (\omega+dd^c {G}_{\mu}-\omega)\wedge(\omega +dd^{c}u-\omega)^{j-1}\wedge \omega^{n-j}\\
&=&\alpha_n n! \int_{\mathbb P^n}\omega^n-\\
&& \sum_{j=1}^n\sum_{k=0}^{j-1}C^k_{j-1} C^j_n(-1)^{j-1-k}\int_{\mathbb{P}^n}u(\omega+dd^c {G}_{\mu}-\omega)\wedge(\omega +dd^{c}u)^k\wedge\omega^{j-1-k}\wedge \omega^{n-j}\\
&=&\alpha_n n! \int_{\mathbb P^n}\omega^n\\
&&-\sum_{j=1}^n\sum_{k=0}^{j-1}C^k_{j-1} C^j_n(-1)^{j-1-k}\int_{\mathbb{P}^n}u(\omega+dd^c {G}_{\mu}-\omega)\wedge(\omega +dd^{c}u)^k\wedge \omega^{n-k-1}\\
&=&\alpha_n n! \int_{\mathbb P^n}\omega^n\\&&+ \sum_{j=1}^n\sum_{k=0}^{j-1}C^k_{j-1} C^j_n(-1)^{j-1-k}\Big[-\int_{\mathbb{P}^n}u(\omega+dd^c {G}_{\mu})\wedge(\omega +dd^{c}u)^k\wedge \omega^{n-k-1}\\
&&\;\;\;\;\;\;\;\;\;\;\;\;\;\;\;\;\;\;\;\;\;\;\;\;\;\;\;\;\;\;\;\;\;\;\;\;\;\;\;\;\;\;\;\;\;\;\;\;\;
+\int_{\mathbb{P}^n}u(\omega +dd^{c}u)^k\wedge \omega^{n-k}\Big]\\
&\leq&\alpha_n n! \int_{\mathbb P^n}\omega^n+\sum_{j=1}^n\sum_{k=0}^{j-1}C^k_{j-1} C^j_n\Big[\int_{\mathbb{P}^n}(\omega+dd^c {G}_{\mu})\wedge(\omega +dd^{c}u)^k\wedge \omega^{n-k-1}\\
&& \;\;\;\;\;\;\;\;\;\;\;\;\;\;\;\;\;\;\;\;\;\;\;\;\;\;\;\;\;\;\;\;
+\int_{\mathbb{P}^n}(\omega +dd^{c}u)^k\wedge \omega^{n-k}\Big].
\end{eqnarray*}
It follows from Chern$-$Levine$-$Nirenberg inequalities for $G_{\mu}\in PSH(\mathbb{P}^n,\omega)\cap L^{1}(\mathbb{P}^n)$ and  $u\in PSH(\mathbb{P}^n,\omega)\cap L^{\infty}(\mathbb{P}^n)$ that the first integral is bounded by a constant depending only on $n$.
\begin{eqnarray*}
\int_{\mathbb{P}^n}(\omega+dd^c G_{\mu})\wedge(\omega +dd^{c}u)^k \wedge \omega^{n-k-1}
&\leq& C_1(n)\parallel G_\mu\parallel_{L^1(\mathbb{P}^n)}\parallel u\parallel^k_{L^\infty(\mathbb{P}^n)}\\
&\leq & C_2(n).
 \end{eqnarray*}
 Also for the last integral we have,
\begin{eqnarray*}
\int_{\mathbb{P}^n}(\omega +dd^{c}u)^k\wedge \omega^{n-k}&\leq& C_3(n)\parallel u\parallel^k_{L^\infty(\mathbb{P}^n)}.
 \end{eqnarray*}
 Then if $- 1 \leq u \leq 0$, we have
$$ 
0 \leq I(\mu)\leq  s_n.
$$
\end{proof} 
 \subsection{Proof of Theorem A}
\begin{proof} We can assume that $ \mathrm{Cap}_{FS} (E) > 0$. Then by \cite{GZ05}, the Monge-Amp\`ere capacity  of $E$ is given by the following formula:
$$
 Cap_{FS} (E):=\sup\Big\{ \int_{E}(\omega+dd^{c}u_{E})^{n} /u\in PSH(\mathbb P^n,\omega),\;\;-1\leq u\leq 0    \Big \}.
$$
Let then $u\in PSH(\mathbb P^n,\omega)$, $-1\leq u\leq 0$, and let   $\mu= (\omega+dd^{c}u)^{n} $. By lemma \ref{lem: bounded}, we have  $I(\mu)\leq s_n,$ where $s _n > 0 $ is an absolute constant. Let $\nu=\frac{\mu}{\sqrt{s_n}}$, then $\nu\in  \mathcal M^+ (\mathbb{P}^n)$ and verifies $ I (\nu) \leq 1$. The proposition \ref{pro:dualité} implies that $$\nu (E) \leq \frac{1}{ \sqrt{- \log \kappa  (E)} }.$$ So
$$
\int_{E}(\omega+dd^{c}u)^{n}\leq \frac{\sqrt{s_n}}{ \sqrt{- \log \kappa  (E)} }.
$$
Since $-1\leq h_E^\ast\leq 0$, we also have 
$$
Cap_{FS}^\ast(E) = \int_{\mathbb P^n}(-h_E^\ast)(\omega+dd^{c}h^{\ast}_{E})^{n}\leq \frac{\sqrt{s_n}}{ \sqrt{- \log \kappa  (E)} }.
$$
 \end{proof}
For the converse of the proposition A is not true. We have the following example which confirms this guess. 
\begin{exa}
 The set  $\mathbb{D}\times \{0\}\subset \mathbb{C}^{2}$ is pluripolar but  non- $\kappa-$polar. Indeed, the set 
 \begin{eqnarray*}
 \mathbb{D}\times \{0\}&=&\{ (z_1,0)\in\mathbb{C}^2/\;\; \vert z_1\vert\leq 1\}\subset \mathbb{C}^2\\
 &\subset & \{ (z_1,z_2)\in\mathbb{C}^2/\;\; \log\vert z_2\vert=-\infty\},
 \end{eqnarray*}
 is pluripolar in $\mathbb{C}^2$. Consider the normalized Lebesgue measure  
 $$\mu= 1_{\bar{\mathbb{D}}}\lambda\otimes [z_2=0],
 $$ 
 on the disk $\mathbb{D}\times \{0\}\subset \mathbb{C}^{2}$. As the support of the measure $\mu$ is contained in $\mathcal U_0$, we have the logarithmic potential $G_\mu$ in affine coordinates(see \cite{AAZ18}): $G_\mu (\zeta) = \mathcal N_\mu (z)$, where
 $$
\mathcal N_{\mu}(z):={1\over 2}\int_{\mathbb{C}^{2}}\log\frac{\vert z-w\vert^{2}+\vert z\wedge w\vert^{2}}{( 1+\vert w\vert^{2})( 1+\vert z\vert^{2} )}d\mu(w),
 $$
 then, 
\begin{eqnarray*}
\mathcal N_{\mu}(z)&=& \frac{1}{2}\int_{\mathbb{D}\times \{0\}}\log\frac{\vert z-w\vert^{2}+\vert z\wedge w\vert^{2}}{( 1+\vert w\vert^{2})( 1+\vert z\vert^{2})}d\lambda(w_1)\times[w_2=0]\\
&=&\frac{1}{2}\int_{\mathbb{D}}\log\frac{\vert z_1-w_1\vert^{2}+\vert z_2\vert+\vert z_2\vert^2\vert w_1\vert^{2}}{( 1+\vert w_1\vert^{2})( 1+\vert z\vert^{2})}d\lambda(w_1).
\end{eqnarray*} 
We calculate $I(\mu)$ as follows:
\begin{eqnarray*}
  I(\mu)&=&-\int_{\mathbb{D}\times \{0\}}\mathcal N_{\mu}(z)d\lambda(z_1)\times[z_2=0]\\
  &=&-\frac{1}{2}\int_{\mathbb{D}}\int_{\mathbb{D}}\log\frac{\vert z_1-w_1\vert^{2}}{ (1+\vert w_1\vert^{2})(1+\vert z_1\vert^{2})}d\lambda(z_1)d\lambda(w_1)\\
  &<&+\infty,
\end{eqnarray*} 
which proves that $\mathbb{D}\times \{0\}$ is non- $\kappa-$polar. 
\end{exa}
Our purpose here is to give an important property of logarithmic capacity.
 \begin{theo}\label{prop: ensemble kappa polaire} Let  $E \subset \mathbb P^n$ be a Borel set. Suppose that there exist $ \mu \in \text{Prob}  (\mathbb P^n)$ such that   $E=\{ G_\mu=-\infty\}$ and $I (\mu) < \infty$.  Then  the set  $E$ is $\kappa-$polar.  
  \end{theo}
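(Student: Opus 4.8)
The plan is to prove that $E$ is $\kappa$-polar by verifying the measure-theoretic characterization supplied by Proposition \ref{pro:dualité}: namely that $\nu(E) = 0$ for every $\nu \in \mathcal M^+(\mathbb{P}^n)$ with $I(\nu) < +\infty$. First I would record the harmless preliminary observation that $\mu$ itself does not charge $E$. Indeed, since $I(\mu) = -\int_{\mathbb{P}^n} G_\mu \, d\mu < +\infty$ and the integrand defining $G_\mu$ is $\le 0$, the nonnegative function $-G_\mu$ is $\mu$-integrable, hence finite $\mu$-almost everywhere; thus $\mu(E) = \mu(\{G_\mu = -\infty\}) = 0$. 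This already shows $E$ is $\mu$-negligible, but for $\kappa$-polarity we must control every finite-energy measure, not just $\mu$.

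The core of the argument is a \emph{mutual-energy comparison}. Fix an arbitrary $\nu \in \mathcal M^+(\mathbb{P}^n)$ with $I(\nu) < +\infty$, and suppose toward a contradiction that $\nu(E) > 0$. Since both $\mu$ and $\nu$ are positive Borel measures of finite energy, the polarization identity Lemma \ref{lem:Fonda} guarantees that the mutual energy is finite:
$$
I(\mu, \nu) = -\int_{\mathbb{P}^n} G_\mu(\zeta)\, d\nu(\zeta) < +\infty.
$$
On the other hand, the integrand defining $G_\mu$ satisfies $\log\!\big(|\zeta \wedge \eta|/(|\zeta|\,|\eta|)\big) \le 0$, so $-G_\mu \ge 0$ everywhere; moreover $-G_\mu \equiv +\infty$ on $E$ by hypothesis. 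Therefore
$$
I(\mu,\nu) = \int_{\mathbb{P}^n} (-G_\mu)\, d\nu \ge \int_{E} (-G_\mu)\, d\nu = +\infty,
$$
the last equality because we integrate the constant $+\infty$ over a set of positive $\nu$-mass. This contradicts the finiteness of $I(\mu,\nu)$.

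Hence $\nu(E) = 0$ for every finite-energy $\nu \in \mathcal M^+(\mathbb{P}^n)$, and Proposition \ref{pro:dualité} lets me conclude that $E$ is $\kappa$-polar, as desired. The only real content of the proof is the finiteness $I(\mu,\nu) < +\infty$; this is precisely where Lemma \ref{lem:Fonda} is indispensable, since without the a priori boundedness of the mutual energy the sign argument would be vacuous. The remaining steps, the nonnegativity of $-G_\mu$ and the fact that a function identically $+\infty$ on a set of positive mass has infinite integral, are routine, so I anticipate no further obstacle once the polarization lemma is invoked.
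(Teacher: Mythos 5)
Your proof is correct and essentially identical to the paper's: both reduce the claim, via the duality characterization of Proposition \ref{pro:dualité}, to showing $\nu(E)=0$ for every finite-energy $\nu\in\mathcal M^+(\mathbb P^n)$, and both obtain this from the finiteness $0\le I(\mu,\nu)<+\infty$ supplied by the polarization Lemma \ref{lem:Fonda}. The only cosmetic difference is the final step: the paper applies the Markov-type bound $\nu(\{G_\mu<-s\})\le I(\mu,\nu)/s$ and lets $s\to\infty$, whereas you argue by contradiction that $\nu(E)>0$ would force $I(\mu,\nu)=+\infty$ --- two phrasings of the same fact that a $\nu$-integrable nonnegative function is $\nu$-a.e.\ finite.
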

\begin{proof}   
 Suppose that   $E=\{ G_\mu=-\infty\}$, where $ \mu \in \text{Prob}  (\mathbb P^n)$ such that $I (\mu) < + \infty$.  Then  $G_\mu$ is $\omega$-plurisubharmonic in $\mathbb P^n$. \\
To show that $E$ is $\kappa$-polar, it is enough   by the  proposition \ref{pro:dualité} to show that for positive Borel measure  $\nu$ such that $I (\nu) < + \infty$, we have $\nu (E) = 0$.\\
 Indeed, put $E_s := \{G_\mu < - s\}$ for $s > 0$. Then for all $s > 0$, we have $- G_\mu \geq s$ on $E_s$ and then 
 $$
 s \nu (E_s) \leq  \int_{\mathbb P^n} (- G_\mu) d \nu.
 $$
 By definition, we have $\int_{\mathbb P^n} (- G_\mu) d \nu = I (\mu,\nu) $. By lemma \ref{lem:Fonda} we have  $ 0 \leq I (\mu,\nu) < + \infty$.
 Hence: $\forall s > 0$,
 $$
   \nu (E_s) \leq  \frac{I (\mu,\nu)}{s}.
 $$
 It follows that $\nu (E) =0$. 
 \end{proof}
 \begin{rem} It would be interesting to establish the theorem  in the case  $I (\mu) = + \infty$.
\end{rem}
In the following  proposition,  we will be interested in the measures that minimizes the energy inspired by the
work of \cite{ST97}.
  \begin{pro}  \label{prop:equilibrium}
  Let   $E \subset \mathbb P^n$ be a closed subset  such that $\kappa(E) > 0$. Then, there exists a  measure $\mu_E \in  \mrm{Prob}  (\mathbb P^n)$ such that $ \mrm{ Supp} \mu_E \subset E $ and $\gamma (E) = I (\mu_E),$ i.e 
  $$
   \kappa (E) = e^{- I (\mu_E)}.
  $$
  \end{pro}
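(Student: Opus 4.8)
The plan is to use the direct method of the calculus of variations: extract a weak limit of an energy-minimizing sequence of probability measures carried by $E$, and then conclude by invoking the lower semi-continuity of the energy functional established in Proposition \ref{pro:lower}. This is the projective analogue of the classical construction of the equilibrium (Robin) measure.

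First I would record that the hypothesis $\kappa(E) > 0$ is, by the very definition $\kappa(E) = e^{-\gamma(E)}$, equivalent to $\gamma(E) = -\log \kappa(E) < +\infty$. Hence the infimum
$$
\gamma(E) = \inf \{ I(\mu) ; \mu \in \text{Prob}(\mathbb P^n),\ \mrm{Supp}\,\mu \subset E \}
$$
is finite, and there exists a minimizing sequence $(\mu_j) \subset \text{Prob}(\mathbb P^n)$ with $\mrm{Supp}\,\mu_j \subset E$ and $I(\mu_j) \to \gamma(E)$.

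Next, since $E$ is closed in the compact space $\mathbb P^n$, it is itself compact, and the set $\{\mu \in \text{Prob}(\mathbb P^n) ; \mrm{Supp}\,\mu \subset E\}$ is a weakly closed subset of the (weakly) compact space $\text{Prob}(\mathbb P^n)$, hence weakly compact. Passing to a subsequence I may therefore assume that $\mu_j \to \mu_E$ weakly for some $\mu_E \in \text{Prob}(\mathbb P^n)$ with $\mrm{Supp}\,\mu_E \subset E$. By the lower semi-continuity of $I$ along weakly convergent sequences (Proposition \ref{pro:lower}),
$$
I(\mu_E) \leq \liminf_{j \to +\infty} I(\mu_j) = \gamma(E).
$$
On the other hand $\mu_E$ is admissible in the infimum defining $\gamma(E)$, so $\gamma(E) \leq I(\mu_E)$. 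Combining the two inequalities yields $I(\mu_E) = \gamma(E)$, that is $\kappa(E) = e^{-\gamma(E)} = e^{-I(\mu_E)}$, as desired.

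The only genuinely delicate point is the preservation of the support condition under weak limits, namely that the weak limit of measures supported in $E$ is again supported in $E$: this is exactly where the closedness of $E$ is used, and it is what makes $\{\mu ; \mrm{Supp}\,\mu \subset E\}$ weakly closed. The remaining ingredients (compactness of $\text{Prob}(\mathbb P^n)$, already recalled in Section 2, and lower semi-continuity of $I$) are available from the earlier results, so no further estimate is required.
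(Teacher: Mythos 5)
Your proof is correct and takes essentially the same route as the paper's: a minimizing sequence, weak-* compactness of $\mathrm{Prob}(\mathbb P^n)$ to extract a limit $\mu_E$ supported in $E$, and the lower semi-continuity of $I$ (Proposition \ref{pro:lower}) to get $I(\mu_E) \leq \gamma(E)$, with the reverse inequality holding by definition of the infimum. If anything, you are more careful than the paper on the one delicate point, namely that closedness of $E$ is what guarantees the weak limit is still supported in $E$.
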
 
  \begin{proof} By definition there exists a minimizing sequence  $(\mu_j)$ in   $ \text{Prob}  (\mathbb P^n)$ with $Supp \mu_j \subset E$   such that  
  $$
 \gamma(E) := \lim_{j \to + \infty} I (\mu_j).
  $$
  By compactness of the space  $ \mrm{Prob}  (\mathbb P^n)$, taking a susbsequence if necessary, we can assume that  $\mu_j \to^* \mu$, where $\mu \in  \mrm{Prob}  (\mathbb P^n)$ with  $\mrm{ Supp} \mu \subset E$.\\
  By lower semi-continuity of the energy ( see \ref{pro:lower}), we get 
 $$ I (\mu) \leq \liminf_{j \to + \infty} I (\mu_j) = \gamma (E)$$
 By definition, $\gamma (E) \leq I (\mu)$, this yields the equality $\gamma (E) = I (\mu)$. 
\end{proof} 
The measure $\mu_E $ is called an equilibrium measure for $E$.\\
Now, we prove that the projective logarithmic capacity $\kappa$ defined before is a capacity in the sense of Choquet. 
 We start by recalling the definition of the outer and inner projective logarithmic capacities: 
\begin{defi}
Let $E$ be a subset of  $\mathbb P^n$. We define the outer and inner projective logarithmic capacities of $E$ by: 
$$
\kappa^*(E):=\inf\limits_{\underset{O \textit{ ouvert de $\mathbb P^n$}}{O\supset E}}\kappa(O),
$$
$$
\kappa_*(E):=\sup\limits_{\underset{K \textit{ compact de $\mathbb P^n$}}{K\subset E}}\kappa(K).
$$
The set $E$ is called capacitable if $\kappa^*(E) = \kappa_*(E)$.
\end{defi}
Observe that by the  proposition \ref{pro:dualité}, we have $\kappa_* (E) = \kappa (E)$ when $E$ is Borel.
 \begin{theo} 
The set function  $\kappa^*$ is a capacity on $\mathbb P^n$. In particular, every Borel set is capacitable.
 \end{theo}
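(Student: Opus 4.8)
The plan is to exhibit $\kappa^*$ as a Choquet capacity by checking the three standard axioms and then quoting Choquet's capacitability theorem, which asserts that on a Polish space --- and the compact metric space $\mathbb{P}^n$ is one --- every $K$-analytic, hence every Borel, set is capacitable for a Choquet capacity. Concretely, writing $\gamma = -\log\kappa$, I would verify: (i) $\kappa$ is monotone; (ii) $\kappa$ is continuous along increasing sequences; (iii) $\kappa$ is continuous from above along decreasing sequences of compacta. Axioms (i) and (ii) are soft, while (iii) carries the analytic content and is where the equilibrium measures of Proposition \ref{prop:equilibrium} and the lower semicontinuity of the energy (Proposition \ref{pro:lower}) enter.

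Monotonicity is immediate from the definition of $\gamma$ as an infimum over competing probability measures: enlarging $E$ enlarges the admissible family and lowers $\gamma$, so $\kappa$ increases, and $\kappa^*$ inherits this since the family of open supersets shrinks. I would also record the inner regularity of $\kappa$ on open sets, $\kappa(O) = \sup\{\kappa(L): L \subset O \text{ compact}\}$ for $O$ open. This follows from the duality formula of Proposition \ref{pro:dualité}, which expresses $1/\sqrt{-\log\kappa(\cdot)}$ as a supremum $\sup\{\nu(\cdot): \nu \in \mathcal M^+(\mathbb P^n),\ I(\nu) \leq 1\}$ over admissible measures, combined with the inner regularity of finite positive Borel measures on $\mathbb P^n$: a competing $\nu$ may be approximated in mass by its restrictions to compact subsets $L \subset O$, and restricting a measure only decreases its energy. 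With this in hand, continuity along increasing open sets $O_j \uparrow O$ is a covering argument --- any compact $L \subset O$ lies in some $O_N$ since the $O_j$ increase --- and the passage to arbitrary increasing sequences for the outer capacity $\kappa^*$ is the routine outer-regularization step of Choquet's scheme.

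The core is axiom (iii). Let $K_j \downarrow K$ be decreasing compacta; monotonicity gives $\kappa(K) \le \lim_j \kappa(K_j)$, so it suffices to show $\gamma(K) \le \lim_j \gamma(K_j)$. If $\lim_j \gamma(K_j) = +\infty$ there is nothing to prove, so assume the limit is finite; then each $\kappa(K_j) > 0$ and Proposition \ref{prop:equilibrium} provides equilibrium measures $\mu_j \in \text{Prob}(\mathbb P^n)$ with $\text{Supp}\,\mu_j \subset K_j$ and $I(\mu_j) = \gamma(K_j)$. By weak-$*$ compactness of $\text{Prob}(\mathbb P^n)$ I pass to a subsequence $\mu_j \to^* \mu$. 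The crucial point --- and the main obstacle --- is that the limit is still supported in the intersection, $\text{Supp}\,\mu \subset \bigcap_j K_j = K$. This uses the decreasing-compact structure: for any closed ball $\bar B$ disjoint from $K$, the compacta $K_j \cap \bar B$ decrease to $\emptyset$ and therefore are empty for large $j$, whence $\mu_j(\bar B) = 0$ eventually and $\mu(B) \le \liminf_j \mu_j(B) = 0$. Lower semicontinuity of the energy (Proposition \ref{pro:lower}) then yields $\gamma(K) \le I(\mu) \le \liminf_j I(\mu_j) = \lim_j \gamma(K_j)$, which closes the argument; applying the same reasoning to a decreasing sequence of compact neighborhoods of $K$ also gives the outer regularity $\kappa^*(K) = \kappa(K)$ on compacta.

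With the three axioms verified, Choquet's capacitability theorem applies verbatim and shows that every Borel subset $E$ of $\mathbb P^n$ satisfies $\kappa^*(E) = \kappa_*(E)$, i.e. is capacitable; in particular, recalling the remark after Proposition \ref{pro:dualité} that $\kappa_*(E) = \kappa(E)$ for Borel $E$, the outer capacity agrees with $\kappa$ on Borel sets. I expect the weak-$*$ support localization in step (iii) to be the only genuinely delicate point, the remaining verifications being routine once the duality formula and the existence of equilibrium measures are granted.
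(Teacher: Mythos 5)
Your verification of monotonicity and of continuity along decreasing sequences of compacta is correct and is essentially the paper's own argument for those two axioms: the paper likewise takes equilibrium measures of the $K_j$ (Proposition \ref{prop:equilibrium}), extracts a weak-$*$ limit, observes that the limit measure is supported in $K=\bigcap_j K_j$, and invokes lower semicontinuity of the energy (Proposition \ref{pro:lower}); your closed-ball argument merely makes the support localization explicit. The genuine gap is in your axiom (ii). You prove continuity along increasing sequences of \emph{open} sets and then assert that the passage to arbitrary increasing sequences for the outer capacity $\kappa^*$ is ``the routine outer-regularization step of Choquet's scheme.'' It is not routine: in Choquet's extension theorem, the step from open-set continuity to continuity of the outer regularization along arbitrary increasing sequences requires \emph{strong} subadditivity (submodularity), $c(U\cup V)+c(U\cap V)\le c(U)+c(V)$, which is exactly what allows one to replace the non-nested open neighborhoods $U_j\supset E_j$ by the increasing opens $W_j=U_1\cup\dots\cup U_j$ with controlled capacity. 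Here the relevant set function is $E\mapsto \sup\{\nu(E):\ \mathrm{Supp}\,\nu\subset E,\ I(\nu)\le 1\}$ (Proposition \ref{pro:dualité}); this is subadditive, but its submodularity is not known --- the classical proofs of submodularity for energy capacities use the maximum (domination) principle, which is unavailable for the projective kernel (the paper stresses that even positive definiteness of $G$ is an open question, cf.\ the proof of Lemma \ref{lem:Fonda}). So the one step you dismiss as routine is precisely the step that has no proof.

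The paper closes this hole differently, and this is the content your outline is missing: it proves continuity along increasing sequences of \emph{Borel} sets directly. Given Borel $B_j\uparrow B$, take a compact $K\subset B$ with $\kappa(K)>0$ and its equilibrium measure $\nu$; choose increasing compacts $K_j\subset B_j\cap K$ with $\nu(K_j)\to 1$, and set $\mu_j=\nu\vert_{K_j}/\nu(K_j)$; then $I(\mu_j)\to I(\nu)$ by monotone convergence, so $\liminf_j\kappa(B_j)\ge e^{-I(\nu)}=\kappa(K)$, and taking the supremum over such $K$ (using $\kappa_*=\kappa$ on Borel sets, again from Proposition \ref{pro:dualité}) gives $\liminf_j\kappa(B_j)\ge\kappa(B)$. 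Note that one cannot shortcut this by restricting a near-optimal $\nu$ to $B_j$ directly, since $\mathrm{Supp}(\nu\vert_{B_j})$ need only lie in $\overline{B_j}$, not in $B_j$; passing through compact subsets is what makes the restriction admissible in the duality formula. With this Borel-sequence continuity in hand (together with your (i) and (iii)), the appeal to Choquet's capacitability theorem is legitimate; without it, your argument does not establish that $\kappa^*$ satisfies the capacity axiom along arbitrary increasing sequences, and the conclusion does not follow.
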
 
  \begin{proof}
   To prove that   $\kappa^*$ is a capacity on $\mathbb P^n$, we verify first that $\kappa$  defined  on Borel sets $\mathcal B (\mathbb P^n) \longrightarrow \R^+$ is precapacity i.e., it is enough to check the following conditions: 
\begin{enumerate}
\item $\kappa (\emptyset) = 0$ and if $A\subset B$ are the Borels of  $\mathbb P^n$ then $$\kappa(A)\leq \kappa(B).$$
\item If $K_{1}\supset K_{2}\supset K_{3}\supset...$ are compact subsets of $\mathbb P^n$, and  
$K=\displaystyle\cap_{n} K_{n}$ then
$$\kappa(K)=\displaystyle \lim_{n\longrightarrow +\infty}\kappa(K_{n}).$$
\item If $B_{1}\subset B_{2} \subset...$ are Borel subsets of  $\mathbb P^n$, and $B=\cup_{n} B_{n}$, then 
$$\kappa(B)=\lim_{n\longrightarrow +\infty}\kappa(B_{n})=\sup_{n}\kappa(B).$$
 \end{enumerate} 
The proof of  $1)$: we have
  $$
  \kappa (A) := e^{- \gamma (A)}=e^{-\inf \{I (\mu) ; \mu \in \text{Prob}  (\mathbb P^n), \mrm{ Supp} \mu \subset A \}}
 $$ and
 
  $$
  \kappa (B) := e^{- \gamma (B)}=e^{-\inf \{I (\mu) ; \mu \in \text{Prob}  (\mathbb P^n), \mrm{ Supp} \mu \subset B \}}.
 $$ Since $$\gamma (A)\geq \gamma (B) $$ then $$ e^{- \gamma (A)}\leq e^{- \gamma (B)}$$ we obtain
 $$
 \kappa (A)\leq \kappa (B).
 $$
 We now prove  $2)$. Applying $1)$ we get that  $$(\star)\;\;\;  \; \kappa(K_{1})\geq \kappa(K_{2})\geq \kappa(K_{3})\geq...\geq \kappa(K).$$
  For each   $n\geq 1$ let $\nu_{n}$ be an equilibrium measure for  $K_{n}$.  Then  $\nu_{n}\in \text{Prob}  (\mathbb{P}^n)$ for all $n$, by compactness of the space  $ \mrm{Prob}  (\mathbb P^n)$, taking  a subsequence  $(\nu_{n_{k}})$, we  can assume  that  $\nu_{n_{k}} \to^* \nu$, where $\nu \in  \mrm{Prob}  (K_{1})$.  By lower semi-continuity of the energy  (Proposition \ref{pro:lower}), we get 
$$ I (\nu)\leq \liminf_{j \to + \infty} I (\nu_{n_{k}}).$$ 
Moreover, since   $\mrm{ Supp}\nu_{n}\subset K_{n}$ for all  $n$, it follows that  $\mrm{ Supp}\nu\subset K$, and so  $e^{-I(\nu)}\leq \kappa(K)$. thus we obtain 
 $$
 \limsup_{k \to + \infty} \kappa(K_{n_{k}})\leq e^{-I (\nu)}\leq\kappa(K),
 $$
 $$
 \limsup_{k \to + \infty} \kappa(K_{n_{k}})\leq\kappa(K),
 $$
 and combining with $(\star)$ we get the desired conclusion.\\
We now prove $3)$. Using again $1)$, we get 

$$ (\star \star)\;\;\;
\kappa(B_{1})\leq \kappa(B_{2})\leq ...\leq \kappa(B).
$$
let  $K$  be a compact subset of $B $,  and let $\nu$ be an equilibrium measure for  $K$. Since $\nu(B_{n}\cap K)\rightarrow \nu(K)=1$ as $n\rightarrow \infty$, we can produce compact sets  $K_{n}\subset B_{n}\cap K$ such that $K_{1}\subset K_{2}\subset ...$ and  $\nu(K_{n})\rightarrow 1$.  For  $n$ sufficiently large, we have   $\nu(K_{n})>0$, we define 
$$
\mu_{n}=\frac{\nu\vert K_{n}}{\nu(K_{n})}.
$$
Then  $\mu_{n} $ is a Borel Probability measure on $K_{n}$ and  
$$
I(\mu_{n})=\frac{-1}{\nu(K_{n})^{2}}\int_{K}\int_{K} \log \left (\frac{\vert \zeta \wedge \eta\vert}{\vert \zeta\vert \vert \eta\vert}\right) 1_{K_{n}}(\zeta)1_{K_{n}}(\eta) d \nu (\zeta) \ d \nu (\eta),
$$
as  $n\rightarrow \infty$, we have  $\nu(K_{n})\rightarrow 1$ and  $1_{K_{n}}\uparrow 1_{K}$ for $\nu-$almost everywhere, so 
$$
\lim_{n\rightarrow \infty }I(\mu_{n})= -\int_{K}\int_{K} \log \left (\frac{\vert \zeta \wedge \eta\vert}{\vert \zeta\vert \vert \eta\vert}\right)d \nu (\zeta) \ d \nu (\eta)=I(\nu).
$$
Since each  $\mu_{n}$ is supported on a compact subset  $B_{n}$, we have  $\kappa(B_{n})\geq e^{I(\mu_{n})}$, and  it follows that   $$\liminf_{n\rightarrow\infty }\kappa(B_{n})\geq \kappa(K).$$ 
 Finally, as $K$ is an arbitrary subset of $B$, by the   proposition \ref{pro:lower}, we have  
 $$\liminf_{n\rightarrow\infty }\kappa(B_{n})\geq \kappa(B).$$ 
 Using $(\star\star)$ we obtain the desired conclusion.\\
 The previous properties show that the sets function defined on the Borel $\sigma-$Algebra subsets
  $$
  \mathcal B (\mathbb P^n) \ni E \longmapsto \log \frac{1}{ \sqrt{- \log \kappa  (E)} },
  $$ 
 is a precapacity on  $\mathbb P^n$. By the  Proposition \ref{pro:dualité}, it is sub-additive. It follows by Choquet Theory that the set functions 
    $$
     2^{\mathbb P^n} \ni E   \longmapsto \log \frac{1}{ \sqrt{- \log \kappa^*  (E)} },
    $$ 
    is a capacity on  $\mathbb P^n $. Using the same reasoning for $\kappa^*$ as (see \cite{Ch55}, \cite{GZ17}). 
   
  The last assertion is consequence of version Choquet's capacitability theorem (see \cite{Ch55}, \cite{GZ17}).
  \end{proof}
 We will say that a  subset $E \subset \mathbb P^n$  is $\kappa^*$-polar if $\kappa^* (E) = 0$.\\
  We have also the subadditivity property which follows from proposition \ref{pro:dualité}.
   \begin{pro} The set function $E \longmapsto  \frac{1}{ \sqrt{- \log \kappa^*  (E)} }$ is subadditive i.e. if $(E_j)$ is a sequence of subsets of  $\mathbb P^n$ and $E := \cup_j E_j$, then
  $$
  \frac{1}{\sqrt{- \log \kappa^*  (E)} } \leq \sum_{j = 0}^{+ \infty}  \frac{1}{ \sqrt{- \log \kappa^*  (E_j)}}.
 $$
 In particular a countable union of $\kappa^*$-polar sets  is  $\kappa^*$-polar.
\end{pro}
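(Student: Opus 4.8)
The plan is to reduce the statement for the outer capacity $\kappa^*$ to the subadditivity already established for $\kappa$ on the Borel $\sigma$-algebra (the proposition obtained above as a consequence of Proposition \ref{pro:dualité}), using the two facts that open sets are Borel and that $\kappa^*$ is by definition an infimum over open neighborhoods. First I would record the reformulation of the outer capacity in terms of the quantity appearing in the statement. The real function $t \longmapsto \frac{1}{\sqrt{-\log t}}$ is increasing on $(0,1)$, and $\kappa$ is monotone under inclusion; combining this with $\kappa^*(E) = \inf\{\kappa(O)\,;\, O \supset E \text{ open}\}$ gives
$$
\frac{1}{\sqrt{-\log \kappa^*(E)}} = \inf_{O \supset E,\ O \text{ open}} \frac{1}{\sqrt{-\log \kappa(O)}}.
$$
In particular every open $O \supset E$ satisfies $\frac{1}{\sqrt{-\log \kappa^*(E)}} \leq \frac{1}{\sqrt{-\log \kappa(O)}}$, and the left-hand side is approximated arbitrarily well by such $O$.

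The core of the argument is then an $\varepsilon 2^{-(j+1)}$ exhaustion. Fixing $\varepsilon > 0$, for each index $j$ (and discarding the trivial case where $\frac{1}{\sqrt{-\log \kappa^*(E_j)}} = +\infty$) I would choose, by the defining infimum above, an open set $O_j \supset E_j$ with
$$
\frac{1}{\sqrt{-\log \kappa(O_j)}} \leq \frac{1}{\sqrt{-\log \kappa^*(E_j)}} + \varepsilon\, 2^{-(j+1)}.
$$
Setting $O := \cup_j O_j$, this is an open, hence Borel, set containing $E = \cup_j E_j$. Applying the Borel subadditivity to the family $(O_j)$ and then the reformulation to the neighborhood $O$ of $E$ yields
$$
\frac{1}{\sqrt{-\log \kappa^*(E)}} \leq \frac{1}{\sqrt{-\log \kappa(O)}} \leq \sum_{j=0}^{+\infty} \frac{1}{\sqrt{-\log \kappa(O_j)}} \leq \sum_{j=0}^{+\infty} \frac{1}{\sqrt{-\log \kappa^*(E_j)}} + \varepsilon.
$$
Letting $\varepsilon \to 0$ gives the claimed inequality, and the ``in particular'' clause is immediate: if each $E_j$ is $\kappa^*$-polar then each summand on the right vanishes, forcing $\frac{1}{\sqrt{-\log \kappa^*(E)}} = 0$ and hence $\kappa^*(E) = 0$.

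The step I expect to require the most care — though it is a soft point rather than a genuine obstacle — is the passage from the open cover $(O_j)$ back to $E$: one must verify that the Borel subadditivity really applies to $O = \cup_j O_j$ (which is legitimate precisely because each $O_j$, and their union, are Borel, so the duality formula of Proposition \ref{pro:dualité} underlying the Borel case is available), and that the approximation of $\kappa^*(E_j)$ by $\kappa(O_j)$ is uniform enough to survive summation. No compactness or equilibrium-measure input is needed here, in contrast with the precapacity axioms proved earlier; the whole statement is purely a monotone-approximation consequence of the Borel case together with the definition of $\kappa^*$.
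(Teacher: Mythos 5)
Your proof is correct and is essentially the argument the paper intends: the paper states this proposition with only the one-line remark that it follows from the duality proposition, which is the same mechanism underlying the Borel-case subadditivity you invoke. Your $\varepsilon\,2^{-(j+1)}$ open-neighborhood approximation is precisely the bridging step the paper leaves implicit — needed because the duality formula is only available for Borel sets while $\kappa^*$ of an arbitrary set is defined as an infimum of $\kappa$ over open supersets — and your handling of the degenerate cases and of the monotonicity and continuity of $t \longmapsto 1/\sqrt{-\log t}$ is sound.
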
 
\section{Logarithmic capacity and Hausdorff measure}
\subsection{ Hausdorff measure}
Let  $f(r)$ be a continuous function defined for  $r\geq 0$ with the properties:\\ 
\begin{center}
 $f(0)=0$,\ \ \ 
 $f(r)$ increasing. 
\end{center}
  Let $E$ be a subset of  $\mathbb{P}^n$, and  $\rho\geq 0$ a real number.  Define
$$
\mathcal{H}^f_\rho(E)=\inf\{ \sum_{i=1}^{+\infty} f(r_i), \;\;\; \cup    \mathcal{S}_i\supset E,\;\;r_i<\rho \},
$$
where the infimum is taken over all countable covers of
 $E$, and  $\mathcal{S}_i$ sphere with radii  $r_i<\rho$.\\
 If we allow   $\rho$ to approach zero, we have 
 
 $$
 \lim_{\rho \longrightarrow 0} \mathcal{H}^f_\rho(E)=\mathcal{H}^f(E),
 $$
 
 we call this {\it limit Hausdorff measure} of  $E$ associated to $f$.\\
  We now give some measures whose potentials have finite energy:
\begin{theo}\label{theo:finie} Let  $ h:[0.1]\longrightarrow \mathbb{R}^{+}$,\;such that\; $h(0)=0$ and

\begin{equation}\label{eq: 1}
 \int_{0}^{1}\frac{h(s)}{s}ds<+\infty.
\end{equation}
If
\begin{equation}\label{eq: 2}
 \mu(B(.,s))\leq h(s),\;\;\textit{where}\;\; B(.,s)\subset \mathbb P^n.
\end{equation} 
Then  $G_{\mu}$ is bounded and $I(\mu)<+\infty$.
 \end{theo}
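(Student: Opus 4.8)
The plan is to reduce everything to a single uniform estimate on the potential. Since $0\le \sin\frac{d(\zeta,\eta)}{\sqrt2}\le 1$ (because $0\le d(\zeta,\eta)\le \pi/\sqrt2$), the integrand $\log\sin\frac{d(\zeta,\eta)}{\sqrt2}$ is $\le 0$, so $G_\mu\le 0$ automatically, and it suffices to produce a constant $M$ with $-G_\mu(\zeta)\le M$ for every $\zeta$. Once this is done, boundedness of $G_\mu$ follows, and since $\mu$ is a probability measure we get $I(\mu)=-\int_{\mathbb P^n}G_\mu\,d\mu\le M<+\infty$ at once. Thus the whole statement comes down to bounding $-G_\mu(\zeta)=\int_{\mathbb P^n}\bigl(-\log\sin\frac{d(\zeta,\eta)}{\sqrt2}\bigr)\,d\mu(\eta)$ uniformly in $\zeta$.

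First I would rewrite this nonnegative integral by the layer-cake (distribution function) formula: fixing $\zeta$ and setting $\psi(\eta):=-\log\sin\frac{d(\zeta,\eta)}{\sqrt2}\ge 0$, one has $-G_\mu(\zeta)=\int_0^{+\infty}\mu(\{\psi>t\})\,dt$. The key observation is that each superlevel set is exactly a geodesic ball centered at $\zeta$: since $u\mapsto\sin u$ is increasing on $[0,\pi/2]$ and $d(\zeta,\eta)/\sqrt2\in[0,\pi/2]$, the condition $\psi(\eta)>t$ is equivalent to $\sin\frac{d(\zeta,\eta)}{\sqrt2}<e^{-t}$, i.e. to $d(\zeta,\eta)<r(t):=\sqrt2\,\arcsin(e^{-t})$. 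Hence $\{\psi>t\}=B(\zeta,r(t))$, and the growth hypothesis (\ref{eq: 2}) gives $\mu(\{\psi>t\})\le h(r(t))$ whenever $r(t)\le 1$.

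Next I would feed in the growth bound and change variables. Note that $r$ is decreasing with $r(t_0)=1$ at $t_0:=-\log\sin(1/\sqrt2)$; for $t\le t_0$ I bound $\mu(\{\psi>t\})\le 1$ trivially, contributing $t_0$, while for $t>t_0$ I bound it by $h(r(t))$. Substituting $s=r(t)$, equivalently $t=-\log\sin\frac{s}{\sqrt2}$ and $dt=-\frac{1}{\sqrt2}\cot\frac{s}{\sqrt2}\,ds$, turns the tail into $\int_0^1 h(s)\,\frac{1}{\sqrt2}\cot\frac{s}{\sqrt2}\,ds$. Using the elementary inequality $\tan x\ge x$ on $(0,\pi/2)$, i.e. $\cot\frac{s}{\sqrt2}\le \sqrt2/s$, this is at most $\int_0^1\frac{h(s)}{s}\,ds$, which is finite by hypothesis (\ref{eq: 1}). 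Altogether $-G_\mu(\zeta)\le t_0+\int_0^1\frac{h(s)}{s}\,ds=:M$ independently of $\zeta$, the sought uniform bound.

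The only real work is in the third step: verifying that the superlevel sets are genuine balls so that (\ref{eq: 2}) applies, and carrying out the change of variables that converts the $dt$-integral of $h(r(t))$ into the Dini-type integral $\int_0^1 h(s)/s\,ds$. The minor technical point to watch is that $h$ is prescribed only on $[0,1]$, which is precisely why I split the $t$-integral at $t_0$ and estimate the low-$t$ part (large radii) by the total mass $\mu(\mathbb P^n)=1$. Everything else—the monotonicity of $\sin$, the bound $\cot x\le 1/x$, and the final passage from boundedness of $G_\mu$ to finiteness of $I(\mu)$—is routine.
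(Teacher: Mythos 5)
Your proof is correct and follows essentially the same route as the paper: the layer-cake formula, the identification of the superlevel sets $\{-\log\sin\frac{d(\zeta,\eta)}{\sqrt2}>t\}$ with geodesic balls, the change of variables $s=\sqrt2\,\arcsin(e^{-t})$, and the Dini condition $\int_0^1 h(s)/s\,ds<\infty$ to conclude a uniform bound on $-G_\mu$. In fact your version is slightly cleaner than the paper's: splitting at $t_0=-\log\sin(1/\sqrt2)$ correctly handles the fact that $h$ is only defined on $[0,1]$, and the explicit inequality $\cot x\le 1/x$ replaces the paper's looser treatment (which writes the kernel as $\coth s$ with an unexplained constant $c$).
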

\begin{proof}
For all  Probability measure satisfying \ref{eq: 2} we have:
\begin{eqnarray*}
 I(\mu):&=&-\int_{\mathbb P^n}\Big(\int_{\mathbb P^n}\log\sin \frac{d(\zeta,\eta)}{\sqrt{2}}d\mu(\eta)\Big)d\mu(\zeta),
 \end{eqnarray*}
  \begin{eqnarray*}
-G_{\mu}(\zeta):&=&-\Big(\int_{\mathbb P^n}\log\sin\frac{d(\zeta,\eta)}{\sqrt{2}}d\mu(\eta)\Big)\\
&=&\int_{0}^{+\infty}\mu(-\log\sin \frac{d(\zeta,\eta)}{\sqrt{2}}>t)dt\\
&=&\int_{0}^{\infty}\mu \left(\{ \eta ; d(\zeta,\eta))<\sqrt{2}\arcsin(e^{-t} )\}\right) dt.
 \end{eqnarray*}
The change of variables: 
 $s=\sqrt{2}\arcsin(e^{-t})$, we have  $ds=\sqrt{2}\frac{-e^{-t}}{\sqrt{1-e^{-2t}}}$  yields   $t=-\log\sin(\frac{s}{\sqrt{2}})$. We conclude that 
  \begin{eqnarray*}
-G_{\mu}(\zeta)&=&-\int_{\frac{\pi}{2}}^{0}\mu(\mathbb{B}(\zeta,s))\coth sds\\
-G_{\mu}(\zeta)&=&\int_{0}^{\frac{\pi}{2}}\mu(\mathbb{B}(\zeta,s))\coth sds\\
&\leq& c.\int_{0}^{\frac{\pi}{2}}h(s)\coth sds,
 \end{eqnarray*}
 so that the right hand side of the above inequality is finite ( this is possible since the function  $\coth s =\frac{1}{s}$ near  0). As  $\int \frac{h(s)}{s}ds<+\infty$, there exists  $C>0$ such that for all  $\zeta\in\mathbb{P}^{n}$, 
 $$
 -C\leq G_{\mu}(\zeta)\leq 0.
 $$
 Which implies that
 $$
 0\leq  I(\mu)\leq C,
 $$ 
 as desired.
 \end{proof}
  Using proposition   \ref{pro: finiteEnergie} and theorem   \ref{theo:finie} we get the following consequence:
 \begin{cor} Let  $0 < m \leq 2 n$ be a fixed real number. Let $\mathcal{H}_m$ denote the Hausdorff measure  (associated to $h$ \ref{eq:h}) of dimension $m$ on  $\C^n \simeq \R^{2n}$. Let $F \subset \mathbb P^n$ be a closed set such that $0 < \mathcal{H}_m (F) < + \infty$. 
Then $\mathcal{H}_m (E) = 0$, for all $\kappa-$polar subset $E \subset F$.
\end{cor}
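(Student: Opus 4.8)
The plan is to combine the Hausdorff-measure hypothesis on $F$ with Theorem~\ref{theo:finie} to build a probability measure of finite energy living on $E$, and then invoke Proposition~\ref{pro: finiteEnergie} to conclude that a finite-energy measure cannot charge a $\kappa$-polar set. The logical skeleton is a proof by contradiction: suppose $\mathcal{H}_m(E) > 0$ for some $\kappa$-polar $E \subset F$, and derive that $E$ carries a finite-energy probability measure, contradicting $\kappa$-polarity.

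First I would recall Frostman's classical lemma (in the local chart $\C^n \simeq \R^{2n}$, which is legitimate since $E \subset F$ is closed hence compact after passing to a chart via the partition $\mathbb P^n = \cup_j \mathcal U_j$, exactly as in the proof of Lemma~\ref{lem:Fonda}): if $\mathcal{H}_m(E) > 0$, then there exists a nonzero positive Borel measure $\mu$ supported on $E$ satisfying the growth estimate
$$
\mu(B(\zeta,s)) \leq C\, s^m, \qquad \zeta \in \mathbb P^n,\ 0 < s < 1,
$$
for some constant $C > 0$. After normalizing, we may take $\mu \in \mathrm{Prob}(\mathbb P^n)$ with $\mathrm{Supp}\,\mu \subset E$ and $\mu(B(\zeta,s)) \leq C' s^m$. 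The point of the hypothesis $0 < \mathcal{H}_m(F) < +\infty$ is precisely to guarantee, via the standard theory of Hausdorff measures, that such a Frostman measure with the correct $m$-dimensional density bound exists and that the covering gauge is the expected power function.

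Next I would set $h(s) := C' s^m$ with $0 < m \leq 2n$, and verify the Dini-type integrability condition \eqref{eq: 1}: since $\int_0^1 \frac{h(s)}{s}\,ds = C'\int_0^1 s^{m-1}\,ds = \frac{C'}{m} < +\infty$ for every $m > 0$, the hypothesis of Theorem~\ref{theo:finie} is met. Together with the density bound $\mu(B(\cdot,s)) \leq h(s)$, that theorem yields that $G_\mu$ is bounded and $I(\mu) < +\infty$. Thus $\mu$ is a probability measure of finite energy supported in $E$.

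Finally, since $E$ is assumed $\kappa$-polar and $\mu$ has finite energy, Proposition~\ref{pro: finiteEnergie} forces $\mu(E) = 0$; but $\mathrm{Supp}\,\mu \subset E$ gives $\mu(E) = \mu(\mathbb P^n) = 1$, a contradiction. Hence $\mathcal{H}_m(E) = 0$. The main obstacle I anticipate is the construction of the Frostman measure itself: one must be careful that the $m$-dimensional Hausdorff measure in the Riemannian metric $d$ on $(\mathbb P^n,\omega_{FS})$ is comparable, on compact sets, to the Euclidean $\mathcal{H}_m$ in a chart, so that the density estimate $\mu(B(\zeta,s)) \leq C s^m$ transfers between the two; this comparability is automatic because $d$ and the Euclidean distance are bi-Lipschitz on any compact subset of a chart $\mathcal U_j$, but it is the step that genuinely requires the condition $\mathcal{H}_m(F) < +\infty$ and a little care to state cleanly.
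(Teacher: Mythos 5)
Your proof is correct, and it rests on the same two pillars as the paper's proof --- Theorem \ref{theo:finie} (a Frostman-type growth bound on $\mu$ forces $G_\mu$ bounded and $I(\mu)<+\infty$) and Proposition \ref{pro: finiteEnergie} (finite-energy measures do not charge $\kappa$-polar sets) --- but the measure you feed into this machinery is produced differently, and the difference is substantive. The paper takes $\mu:=\mathbf{1}_F\mathcal{H}_m$ and asserts outright that this restriction satisfies condition (\ref{eq: 2}) with $h(s)=s^m$; you instead argue by contradiction and invoke Frostman's lemma to extract a measure with the growth bound supported on $E$ itself. Your route is the more solid one: finiteness of $\mathcal{H}_m(F)$ only yields the density bound $\limsup_{r\to 0}\mathcal{H}_m\bigl(F\cap B(x,r)\bigr)/(2r)^m\leq 1$ at $\mathcal{H}_m$-almost every point, \emph{not} a uniform bound at every center and scale. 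For instance, with $m=1$, the closed set consisting of a point $x_0$ together with horizontal segments of length $1/j^2$ placed at heights $2^{-j}$ above $x_0$ has finite positive $\mathcal{H}_1$, yet $\mathcal{H}_1\bigl(F\cap B(x_0,r)\bigr)/r$ is unbounded as $r\to 0$. So the paper's proof, read literally, has a gap exactly where you put Frostman's lemma (which, incidentally, can be found in the paper's own reference \cite{C65}), and your extraction step --- or an equivalent density-theorem-plus-exhaustion argument --- is what makes the corollary rigorous. Three smaller remarks. (i) Since Frostman's lemma needs only $\mathcal{H}_m(E)>0$ (for $E$ compact, or Borel via capacitability), your argument never actually uses $\mathcal{H}_m(F)<+\infty$; that hypothesis exists to make the paper's restriction measure a finite measure. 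Your closing claim that finiteness of $\mathcal{H}_m(F)$ is ``genuinely required'' for the chart-comparability step is therefore not right: your proof in fact gives the stronger statement that every Borel set of positive $\mathcal{H}_m$-measure, $0<m\leq 2n$, is non-$\kappa$-polar. (ii) Your parenthetical ``$E\subset F$ is closed hence compact after passing to a chart'' is garbled: $E$ need not be closed, and $F$ need not lie in a single chart; the routine fix is to refine $\mathbb P^n=\cup_j\,\mathcal U_j$ to a finite cover by closed sets $V_j\subset\mathcal U_j$ and apply Frostman to some $E\cap V_j$ of positive measure, where the Fubini--Study and Euclidean distances are indeed bi-Lipschitz comparable. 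Note also that Frostman's lemma requires $E$ Borel (or analytic), but this is the same implicit assumption already present in Proposition \ref{pro: finiteEnergie}, which is stated for Borel sets. (iii) Once you have a finite-energy probability measure supported in $E$, the contradiction with the very definition of $\kappa$-polarity is immediate; the detour through Proposition \ref{pro: finiteEnergie} is harmless but unnecessary.
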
 
\begin{proof}
The measure $\mu := {\bf 1}_F \mathcal{ H}_m$ is Borel measure satisfying the condition  \ref{eq: 1} for an increasing function   $h$   defined by:
\begin{equation}\label{eq:h}
h (s)=s^m,\;\;\; \textit{  for   }  \;\;\;  s>0.
\end{equation}  
By using  theorem \ref{theo:finie}, we conclude that $I(\mu)<+\infty$ and then $\mathcal{H}_m(E)=0$ for all $\kappa-$polar set $E \subset F$, by proposition  \ref{pro: finiteEnergie}.
\end{proof}
\section{Projective transfinite diameter and proof of Theorem B}
   Like in classic setting (\cite{Ran95,ST97,Pa04}), we introduce the associated (projective logarithmic) transfinite diameter   and (projective logarithmic) chebychev constant notions and we connect them to projective logarithmic  capacity $\kappa$.\\
   We recall that
 $$
G(\zeta,\eta):= \log\sin\frac{d(\zeta,\eta)}{\sqrt{2}}\leq 0,\;\;\; \zeta,\;\eta\in \mathbb{P}^n. 
 $$
  For $s \in \N, s\geq 2$, we define the {\it(projective logarithmic) diameter} of order $s \in \N^*$  of $E\subset \mathbb P^n$ by
   $$
 D_s (E) := e^{-\theta_s (E)},
 $$
 where
 \begin{eqnarray*}
\theta_s  &:=&\inf_{\{\zeta_i\in E\}_1^s}\frac{1}{s(s-1)}\sum_{1\leq i\neq j\leq s}(- G(\zeta_i,\zeta_j)).\\
 \end{eqnarray*}
\begin{pro}\label{prop:D_n}  
 The sequence $s \longmapsto D_s := D_s (E)$ is decreasing and then the limit 
 \begin{equation}
 D (E) := \lim_{s\rightarrow+ \infty} D_s (E), 
 \end{equation}
exists   in $\R^+$ and  we have the following
estimate:
$$
D (E) \leq \sin \left(\frac{\mathrm{diam} (E)}{\sqrt{2}}\right),
$$
where $\mathrm{diam} (E)$ is diameter of  $E$ of $\mathbb P^n$ with respect to the Fubini-Study metric.
\end{pro}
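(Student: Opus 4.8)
The plan is to phrase everything in terms of the exponents $\theta_s(E)$, which I expect to be \emph{increasing}, so that $D_s(E)=e^{-\theta_s(E)}$ is correspondingly decreasing. Since $G\le 0$, every summand $-G(\zeta_i,\zeta_j)$ is non-negative, whence $\theta_s(E)\in[0,+\infty]$ and $D_s(E)\in[0,1]$; once monotonicity is established, the existence of the limit in $\R^+$ will be immediate. I work throughout with the extended-real conventions, so that degenerate configurations forcing $\theta_s=+\infty$ (e.g. when $E$ is a single point, where $D_s(E)=0$ for all $s$) cause no trouble.

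First I would prove monotonicity by the classical averaging-over-deletions argument. Fix $s+1$ points $\zeta_1,\dots,\zeta_{s+1}\in E$. For each index $k$, delete $\zeta_k$ and apply the definition of $\theta_s$ to the remaining $s$ points, which gives
$$
\sum_{\substack{1\le i\neq j\le s+1\\ i,j\neq k}}\bigl(-G(\zeta_i,\zeta_j)\bigr)\ \geq\ s(s-1)\,\theta_s(E).
$$
Summing over $k=1,\dots,s+1$, each ordered pair $(i,j)$ with $i\neq j$ is counted exactly $s-1$ times (once for every $k\notin\{i,j\}$), so the left-hand side equals $(s-1)\sum_{1\le i\neq j\le s+1}(-G(\zeta_i,\zeta_j))$. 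Dividing by $(s-1)$ and then by $s(s+1)$, and taking the infimum over the configuration $\{\zeta_i\}_1^{s+1}$, yields $\theta_{s+1}(E)\ge\theta_s(E)$, i.e. $D_{s+1}(E)\le D_s(E)$.

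With monotonicity in hand, the sequence $(D_s(E))$ is decreasing and bounded below by $0$, so the limit $D(E)$ exists in $\R^+$, settling the convergence claim. For the diameter estimate I would use a pointwise bound on the kernel: since $0\le d(\zeta,\eta)\le \pi/\sqrt{2}$ on $\mathbb{P}^n$, the map $x\mapsto \sin(x/\sqrt{2})$ is non-decreasing on the relevant range, so for all $\zeta_i,\zeta_j\in E$ we have $d(\zeta_i,\zeta_j)\le\mathrm{diam}(E)$ and hence
$$
-G(\zeta_i,\zeta_j)=-\log\sin\frac{d(\zeta_i,\zeta_j)}{\sqrt{2}}\ \geq\ -\log\sin\frac{\mathrm{diam}(E)}{\sqrt{2}}.
$$
Averaging over the $s(s-1)$ ordered pairs and taking the infimum gives $\theta_s(E)\ge-\log\sin(\mathrm{diam}(E)/\sqrt{2})$ for every $s$, that is, $D_s(E)\le\sin(\mathrm{diam}(E)/\sqrt{2})$; letting $s\to\infty$ gives the asserted bound.

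None of the three steps is genuinely hard. The only point demanding care is the combinatorial bookkeeping in the monotonicity step—in particular the verification that each ordered pair is recounted exactly $s-1$ times, and the consistent handling of configurations producing $\theta_s=+\infty$. I expect this modest bookkeeping to be the main, if minor, obstacle.
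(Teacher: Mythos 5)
Your proof is correct and takes essentially the same route as the paper: the identical averaging-over-deletions argument (with the same count that each pair survives deletion of exactly $s-1$ of the indices) shows $\theta_s$ is increasing, hence $D_s=e^{-\theta_s}$ is decreasing. The only cosmetic difference is the diameter estimate, where the paper writes $D(E)\le D_2(E)=\sin\left(\mathrm{diam}(E)/\sqrt{2}\right)$ while you bound each $\theta_s$ directly via the pointwise inequality $-G(\zeta_i,\zeta_j)\ge -\log\sin\left(\mathrm{diam}(E)/\sqrt{2}\right)$; both rest on the same monotonicity of $\sin$ on the range of $d/\sqrt{2}$.
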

\begin{proof} Note that $ \theta_s := - \log D_s (E)$.
Choose $\zeta_1,\cdots,\zeta_s\in E $ arbitrary. If we leave out any index $i=1,\cdots ,s$ then for the remaining $s-1$ points, we obtain by the definition of $\theta_{s-1}$ that
$$
\frac{1}{(s-1)(s-2)}\sum\limits_{\underset{j\neq i,l\neq i}{1\leq j\neq l\leq s}}(-G(\zeta_j,\zeta_l))\geq \theta_{s-1}.
$$
After summing up for  $i=1,2,\cdots,s$ this yields 
$$
\frac{1}{s-1}\sum_{1\leq j\neq l\leq s} (-G(\zeta_i,\zeta_j))\geq s.\theta_{s-1}, 
$$
 for each term $(-G(\zeta_j,\zeta_l))$ occurs exactly  $s-2$  times. Now taking the infimum for all possible  $\zeta_1,\cdots,\zeta_s\in E$, we obtain $s.\theta_s\geq s. \theta_{s-1}$, as claimed. The last property follows from the fact that $D (E) \leq D_2 (E) = \sin \left(\frac{\mathrm{diam} (E)}{\sqrt{2}}\right)$.
\end{proof}
\begin{defi} 
 If $E \subset \mathbb P^n$ is a closed set, then the number
 $$
 D (E) :=\lim_{s\rightarrow +\infty}  D_s (E),
 $$
 is called (projective logarithmic) tranfinite diameter of $E$.
 \end{defi}
 \begin{theo} \label{theo: exists}
 Let    $E \subset \mathbb{P}^n$ be compact set. Then, we have
 $$
  D (E) = \kappa (E).
 $$
 \end{theo}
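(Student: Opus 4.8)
The plan is to prove the equality $D(E) = \kappa(E)$ by establishing the two inequalities $D(E) \geq \kappa(E)$ and $D(E) \leq \kappa(E)$ separately, in the spirit of the classical Fekete--Szeg\H{o} theory adapted to the projective logarithmic kernel $G(\zeta,\eta) = \log \sin \frac{d(\zeta,\eta)}{\sqrt 2}$.

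For the inequality $D(E) \geq \kappa(E)$, equivalently $\theta_s \leq \gamma(E)$ passing to the limit, I would first take an equilibrium measure $\mu_E$ for $E$ (this exists by Proposition \ref{prop:equilibrium} when $\kappa(E) > 0$; the case $\kappa(E) = 0$ is handled separately below) so that $I(\mu_E) = \gamma(E)$. Integrating the definition of $\theta_s$ against the product measure $d\mu_E(\zeta_1) \cdots d\mu_E(\zeta_s)$ and using that the off-diagonal terms reproduce $I(\mu_E)$ while the diagonal is excluded, one obtains the bound $\theta_s \leq I(\mu_E) = \gamma(E)$ for every $s$; letting $s \to +\infty$ gives $\lim_s \theta_s \leq \gamma(E)$, i.e. $D(E) \geq \kappa(E)$. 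The key mechanism here is simply that averaging the discrete energy functional over the equilibrium measure cannot exceed the continuous energy.

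For the reverse inequality $D(E) \leq \kappa(E)$, equivalently $\gamma(E) \leq \lim_s \theta_s$, I would extract for each $s$ a Fekete $s$-tuple $(\zeta_1^{(s)}, \dots, \zeta_s^{(s)})$ realizing (or nearly realizing) the infimum in $\theta_s$, and form the normalized counting measures $\mu_s := \frac{1}{s}\sum_{i=1}^s \delta_{\zeta_i^{(s)}}$. By compactness of $\mathrm{Prob}(\mathbb P^n)$ I may pass to a weak-$*$ limit $\mu_s \to \mu_\infty$ with $\mathrm{Supp}\,\mu_\infty \subset E$. The heart of the argument is then to relate $I(\mu_\infty)$ to $\lim_s \theta_s$. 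Since the kernel $-G$ is lower semi-continuous and bounded below (indeed $G \leq 0$), I would truncate the kernel, writing $-G = \lim_M \min(-G, M)$ as an increasing limit of bounded continuous kernels, apply weak-$*$ convergence of $\mu_s \otimes \mu_s$ to the truncated kernel, and then let $M \to +\infty$ by monotone convergence. The diagonal contribution (the excluded $i = j$ terms, which carry weight $1/s$ in $\mu_s \otimes \mu_s$) must be shown to be asymptotically negligible; this is where the lower semi-continuity of the energy (Proposition \ref{pro:lower}) is the clean tool, yielding $I(\mu_\infty) \leq \liminf_s \theta_s$, hence $\gamma(E) \leq I(\mu_\infty) \leq \lim_s \theta_s$.

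The main obstacle I anticipate is precisely the control of the diagonal terms and the passage through the singularity of $-G$ near the diagonal in the lower-semicontinuity step: the discrete functional $\theta_s$ deliberately omits the diagonal, whereas $I(\mu_s)$ would include it and diverge (each $\delta_{\zeta_i}$ has infinite self-energy, as Example \ref{ex:FS}'s Dirac computation shows). The truncation argument must therefore be arranged so that the omitted diagonal in $\theta_s$ and the truncation level are coordinated; this is the standard but delicate point in transfinite-diameter arguments. Finally, I would dispose of the degenerate case $\kappa(E) = 0$: if no Fekete limit has finite energy then $\gamma(E) = +\infty$ forces $\lim_s \theta_s = +\infty$ as well via the same truncation estimate, giving $D(E) = 0 = \kappa(E)$ and completing the proof.
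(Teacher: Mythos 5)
Your proposal is correct and follows essentially the same route as the paper: the inequality $D(E)\geq\kappa(E)$ is the paper's Lemma \ref{lem: D(K)} (integrating the discrete energy functional against a product measure --- the paper uses an arbitrary $\mu\in\mathrm{Prob}(E)$ and takes an infimum, so it does not even need the equilibrium measure), and the reverse inequality is the paper's Lemma \ref{lem: gamma}, which uses Fekete points, counting measures $\mu_s$, weak-$*$ limits, and continuous minorants $0\leq h\leq -G$ (your truncation $\min(-G,M)$ is exactly such an $h$) with the diagonal controlled by the $\Vert h\Vert/s$ term and a final monotone passage to $I(\nu)=\sup_h\int\int h\,d\nu\,d\nu$. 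Your one imprecision --- attributing the step $I(\mu_\infty)\leq\liminf_s\theta_s$ to Proposition \ref{pro:lower}, which cannot apply directly since $I(\mu_s)=+\infty$ --- is harmless, because the truncation-plus-diagonal argument you describe is self-contained and is precisely what the paper does.
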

 The proof of the theorem \ref{theo: exists} relies the following results.
 \begin{lem}\label{lem: D(K)}
Let  $E \subset \mathbb P^n$ be a closed set. Then  
$$ 
- \log D(E)\leq \gamma(E).
$$
\end{lem}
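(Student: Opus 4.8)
The plan is to prove the inequality $-\log D(E) \leq \gamma(E)$ by showing that the transfinite diameter of order $s$ provides a lower bound for the Robin constant, and then passing to the limit. Concretely, I would exploit the equilibrium measure $\mu_E$ furnished by Proposition \ref{prop:equilibrium} (valid when $\kappa(E) > 0$) together with the discrete quantities $\theta_s$ defining $D_s(E)$. The key idea is that the discrete energy sums $\theta_s$ approximate the continuous energy $I(\mu_E) = \gamma(E)$ from below, since the kernel $G(\zeta,\eta) = \log\sin\frac{d(\zeta,\eta)}{\sqrt 2}$ is upper semicontinuous and $\leq 0$.

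First I would dispose of the trivial case: if $\gamma(E) = +\infty$, i.e. $\kappa(E) = 0$, then the inequality holds automatically, so I may assume $\kappa(E) > 0$ and invoke the existence of an equilibrium measure $\mu_E$ with $\mathrm{Supp}\,\mu_E \subset E$ and $I(\mu_E) = \gamma(E) < +\infty$. Next, I would integrate the symmetric kernel against the product measure $\mu_E \otimes \cdots \otimes \mu_E$ ($s$ factors) and compare with the infimum defining $\theta_s$. The point is that for any configuration, $\theta_s$ is an average of $-G$ over pairs, and by choosing the sample points according to $\mu_E$ (or by a standard Fubini argument integrating the defining inequality $\sum_{i\neq j}(-G(\zeta_i,\zeta_j)) \geq s(s-1)\theta_s$ against $d\mu_E(\zeta_1)\cdots d\mu_E(\zeta_s)$) one obtains
\begin{equation*}
s(s-1)\,\theta_s \leq \int \sum_{i\neq j}(-G(\zeta_i,\zeta_j))\,d\mu_E(\zeta_1)\cdots d\mu_E(\zeta_s) = s(s-1)\,I(\mu_E).
\end{equation*}
This yields $\theta_s \leq I(\mu_E) = \gamma(E)$ for every $s$, hence $-\log D(E) = \lim_s \theta_s \leq \gamma(E)$, which is exactly the claim.

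The main subtlety, and the step I would watch most carefully, is the interchange of the infimum over configurations with integration against $\mu_E$. The inequality $\sum_{i\neq j}(-G(\zeta_i,\zeta_j)) \geq s(s-1)\theta_s$ holds pointwise for all $(\zeta_1,\dots,\zeta_s) \in E^s$ by the very definition of $\theta_s$, so integrating it against the probability measure $d\mu_E(\zeta_1)\cdots d\mu_E(\zeta_s)$ preserves the inequality in the direction I need, and the right-hand integral factors (by independence of the coordinates) into pairwise energies each equal to $-\int\int G\,d\mu_E\,d\mu_E = I(\mu_E)$. The only technical point is ensuring these integrals are well defined and finite, which is guaranteed because $G \leq 0$ (so the integrals make sense in $[0,+\infty]$) and because $I(\mu_E) < +\infty$. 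I would therefore emphasize that the direction of the inequality is favorable precisely because $\theta_s$ is defined as an infimum, making the pointwise bound automatic and the integration harmless.
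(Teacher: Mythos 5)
Your proof is correct and takes essentially the same route as the paper: integrate the pointwise bound $\sum_{i\neq j}(-G(\zeta_i,\zeta_j)) \geq s(s-1)\theta_s$, valid on $E^s$ by the definition of $\theta_s$ as an infimum, against an $s$-fold product probability measure, factor by Tonelli (legitimate since $-G \geq 0$) into $s(s-1)$ pairwise energies, and let $s \to \infty$. The only difference is cosmetic: you specialize to the equilibrium measure $\mu_E$, which forces the case split on $\kappa(E)$ and an appeal to Proposition \ref{prop:equilibrium}, whereas the paper integrates against an arbitrary probability measure supported in $E$ and takes the infimum over such measures at the end, so no existence of a minimizer is needed.
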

\begin{proof}
Let $\mu\in Prob(E)$ be a arbitrary measure and define $\nu:=\mu^{\otimes s }$ a product measure on  $\mathbb P^n\times\cdots \times \mathbb P^n $. Le us consider the following lower semi-continuous function $h$ defined by
\begin{eqnarray*}
h : (x_1,\cdots, x_s)&\mapsto &  \frac{1}{s(s-1)}\sum_{1\leq i\neq j\leq s }(-G(x_i, x_j)).
\end{eqnarray*}
Since by definition  $0\leq \theta_s \leq h$, it follows that
\begin{eqnarray*}
\theta_s &\leq &\int_{(\mathbb P^n)^s}h (x_1,\cdots, x_s)d\nu(x_1,\cdots,x_s)\\
&=&\int_{(\mathbb P^n)^s}\frac{1}{s(s-1)}\sum_{1\leq i\neq j\leq s }(-G(x_i, x_j))d\nu(x_1,\cdots,x_s)\\
&=& \frac{1}{s(s-1)}\sum_{1\leq i\neq j\leq s } \int_{E^2}(-G(x_i, x_j))d\mu(x_i)d\mu(x_j)=I(\mu).
\end{eqnarray*}
Taking infimum over  $\mu$, yields 
$$
\theta_s (E) \leq \gamma(E). 
$$
Taking the limit in $s$, we obtain
$$
- \log D(E)\leq \gamma(E).
$$
\end{proof}
\begin{lem}\label{lem: gamma}
For all closed set $E \subset \mathbb P^n$, we have 
$$
- \log D(E)\geq \gamma(E).
$$
\end{lem}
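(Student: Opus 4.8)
The plan is to produce, out of near-minimizers of the functional defining $\theta_s$, an explicit measure whose energy is controlled by $\lim_s \theta_s$. Since $-G$ is lower semicontinuous and bounded below on the compact product $E^s$ (it is continuous off the diagonals and tends to $+\infty$ there), the infimum defining $\theta_s$ is attained at Fekete points $\zeta_1^{(s)},\dots,\zeta_s^{(s)}\in E$, so that
$$
\theta_s=\frac{1}{s(s-1)}\sum_{1\leq i\neq j\leq s}\bigl(-G(\zeta_i^{(s)},\zeta_j^{(s)})\bigr).
$$
I form the empirical probability measures $\mu_s:=\frac{1}{s}\sum_{i=1}^s\delta_{\zeta_i^{(s)}}$, each supported in $E$. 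By compactness of $\text{Prob}(\mathbb{P}^n)$ I pass to a subsequence along which $\mu_s\to^*\mu$ with $\mathrm{Supp}\,\mu\subset E$, since $E$ is closed.

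The naive self-energy $I(\mu_s)$ is $+\infty$ because of the diagonal terms $G(\zeta_i,\zeta_i)=-\infty$, so lower semicontinuity of $I$ (Proposition \ref{pro:lower}) cannot be applied to $\mu_s$ directly. To circumvent this I introduce the truncated kernels $G_M:=\max(G,-M)$ for $M>0$, which are continuous and bounded on $\mathbb{P}^n\times\mathbb{P}^n$, satisfy $-G_M\geq 0$, and increase pointwise to $-G$ as $M\to+\infty$. Because $G_M$ is continuous and bounded, weak convergence $\mu_s\to^*\mu$ gives
$$
\int_{\mathbb{P}^n}\int_{\mathbb{P}^n}(-G_M)\,d\mu\,d\mu=\lim_{s\to\infty}\int_{\mathbb{P}^n}\int_{\mathbb{P}^n}(-G_M)\,d\mu_s\,d\mu_s.
$$

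For the key estimate I expand the double integral against $\mu_s$ and separate diagonal from off-diagonal contributions. The diagonal sum equals $M/s$, since $-G_M(\zeta,\zeta)=M$; for the off-diagonal sum I use $-G_M\leq -G$, bounding it by $\frac{1}{s^2}\sum_{i\neq j}(-G(\zeta_i,\zeta_j))=\frac{s-1}{s}\theta_s\leq\theta_s$. Hence
$$
\int_{\mathbb{P}^n}\int_{\mathbb{P}^n}(-G_M)\,d\mu_s\,d\mu_s\leq \frac{M}{s}+\theta_s.
$$
Letting $s\to\infty$ along the subsequence (so that $M/s\to 0$ and $\theta_s\to -\log D(E)$) yields $\int\int(-G_M)\,d\mu\,d\mu\leq -\log D(E)$. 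Letting then $M\to+\infty$ and invoking the monotone convergence theorem for the nonnegative increasing integrands $-G_M$ gives $I(\mu)=\int\int(-G)\,d\mu\,d\mu\leq -\log D(E)$. Since $\mathrm{Supp}\,\mu\subset E$, the definition of $\gamma$ forces $\gamma(E)\leq I(\mu)\leq -\log D(E)$, which is the claimed inequality.

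The essential difficulty is precisely the diagonal singularity of $G$: the Fekete measures $\mu_s$ have infinite self-energy, so one cannot merely combine weak convergence with lower semicontinuity of $I$. The truncation $G_M$, the harmless diagonal defect $M/s$ which vanishes in the limit, and the monotone passage $M\to+\infty$ are what make the comparison rigorous; the only other point requiring care is the existence of the Fekete minimizers, which rests on lower semicontinuity and boundedness below of $-G$ on the compact set $E^s$.
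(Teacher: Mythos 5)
Your proof is correct and takes essentially the same route as the paper's: Fekete points realizing $\theta_s$, the empirical measures $\mu_s$, a weak-$*$ limit $\mu$ supported in $E$, and the key bound in which the diagonal terms contribute $O(1/s)$ while the off-diagonal sum is controlled by $\frac{s-1}{s}\theta_s \leq -\log D(E)$. The only cosmetic difference is the choice of continuous minorant of $-G$: you use the truncations $-G_M=\min(-G,M)$ and finish with monotone convergence, whereas the paper integrates arbitrary continuous $h$ with $0\leq h\leq -G$ and then takes the supremum over such $h$; both devices serve the identical purpose of pushing the estimate through the weak limit despite the diagonal singularity.
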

\begin{proof}
If $D(E)=0$, then with lemma \ref{lem: D(K)} we have $\gamma(E)=+\infty$, the result holds. Suppose that $D(E) > 0$, and let $\epsilon>0$ fixed. Let $\zeta_1,\cdots,\zeta_s$ be  points of $E$ which realize the infimum of $\theta_s (E)$, called Fekete points. Put $\mu:=\mu_s=\frac{1}{s}\sum_{i=1}^{s}\delta_{\zeta_i}$ where $\delta_{\zeta_i}$ are Dirac measures at the points $\zeta_i,\;\;i=1,\cdots,s$.  For a continue function $h$ such that $0\leq h\leq -G $ and with support in  $E$, we have
\begin{eqnarray*}
\int\int_{E\times E}h d\mu_s d\mu_s&=&\frac{1}{s^2}\sum_{i,j=1}^{s}h(\zeta_i,\zeta_j)\\
&=& \frac{1}{s^2}\sum_{i=1}^{s}h(\zeta_i,\zeta_i)+\frac{1}{s^2}\sum\limits_{\underset{i\neq j}{i,j=1}}^{s}h(\zeta_i,\zeta_j)\\
&\leq& \frac{1}{s^2}\sum_{i=1}^{s}h(\zeta_i,\zeta_i)+\frac{1}{s^2}\sum\limits_{\underset{i\neq j}{i,j=1}}^{s}(-G(\zeta_i,\zeta_j))\\
&\leq& \frac{\Vert h\Vert}{s}+\frac{1}{s^2}\sum\limits_{\underset{i\neq j}{i,j=1}}^{s}(-G(\zeta_i,\zeta_j))\\
&\leq& \frac{\Vert h\Vert}{s}+\frac{s-1}{s}\theta_s\\
&\leq &\frac{\Vert h\Vert}{s} - \log D(E).
\end{eqnarray*}
We have used  proposition \ref{prop:D_n} in last step. In fact, we obtain for $s\geq N=N(\Vert h\Vert, \epsilon) $  the inequality 
\begin{equation}\label{eq:K times K}
\int\int_{E\times E}hd\mu_s d\mu_s\leq - \log D(E)+\epsilon.
\end{equation}
If we take subsequence  $\mu_{s_k}$ of  $\mu$ weakly converges to $\nu$, we have
$$
\int\int_{E\times E}hd\nu d\nu\leq - \log D(E)+\epsilon,
$$
and thus, 
$$
\gamma(E)\leq I(\nu):=\int\int_{E\times E}(-G)d\nu d\nu=\sup\limits_{\underset{h\in C_c(\mathbb P^n\times\mathbb P^n)}{0\leq h\leq -G}}\int\int_{E\times E}hd\nu d\nu\leq - \log  D(K)+\epsilon,
$$
for all $\epsilon> 0$. This shows that $$\gamma(E)\leq - \log D(E). $$
\end{proof}
Finally, combining the lemma  \ref{lem: D(K)} and  \ref{lem: gamma} to conclude  
$$
- \log D(E)=\gamma (E), 
$$ 
for all compact set $E$ of  $\mathbb P^n$,  and the proof of the theorem \ref{theo: exists} is finished.\\
We now have the equidistricution theorem of Fekete points according to equilibrium measure.
\begin{pro}
If  $K\subset \mathbb{P}^n$ is compact set with $\gamma(K)<+\infty$, $\zeta_1,...,\zeta_s\in K$ such that  
 $$
 \theta_s(K)=\frac{-1}{s(s-1)}\sum_{i\neq j}G(\zeta_i,\zeta_j),
 $$
 and  $$
 \nu_s=\sum_{j=1}^s\frac{1}{s}\delta_{\zeta_j},
 $$
 then
 $$
 \nu_s\rightarrow \nu,
 $$
 
weakly in  $\mathbb P^n$. Moreover, $\nu$ is the equilibrium measure of $K$.
 \end{pro}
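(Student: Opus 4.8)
The plan is to combine weak compactness of $\mathrm{Prob}(\mathbb{P}^n)$ with the Fekete estimate already obtained inside the proof of Lemma \ref{lem: gamma}, and only afterwards to upgrade subsequential convergence to convergence of the whole sequence. Since $\mathrm{Prob}(\mathbb{P}^n)$ is weakly compact, the sequence $(\nu_s)$ has weak limit points; fix one, say $\nu_{s_k}\to^*\nu$. Because every $\zeta_j$ lies in the closed set $K$, the limit $\nu$ is a probability measure with $\mathrm{Supp}\,\nu\subset K$. The first goal is to show that \emph{any} such limit $\nu$ is an equilibrium measure, i.e. $I(\nu)=\gamma(K)$.

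For this I would recycle the inequality \eqref{eq:K times K}. Let $h$ be continuous with $0\leq h\leq -G$ and support in $K\times K$. The Fekete bound gives, for all large $s$,
$$
\int\int_{K\times K} h\, d\nu_s\, d\nu_s \leq -\log D(K)+\epsilon .
$$
By Theorem \ref{theo: exists} we have $D(K)=\kappa(K)$, hence $-\log D(K)=\gamma(K)$. The products $\nu_{s_k}\otimes\nu_{s_k}$ converge weakly to $\nu\otimes\nu$, and since $h$ is bounded and continuous on the compact space $\mathbb{P}^n\times\mathbb{P}^n$ I may pass to the limit:
$$
\int\int_{K\times K} h\, d\nu\, d\nu \leq \gamma(K)+\epsilon .
$$
Letting $\epsilon\to 0$ and taking the supremum over all admissible $h$, together with $I(\nu)=\sup_h \int\int h\, d\nu\, d\nu$, yields $I(\nu)\leq\gamma(K)$; the reverse inequality holds by definition of $\gamma$, so $I(\nu)=\gamma(K)$ and $\nu$ is an equilibrium measure by Proposition \ref{prop:equilibrium}. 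The truncation by a continuous $h$ is essential here: each $\nu_s$ is atomic, so $I(\nu_s)=+\infty$ and the lower semicontinuity of $I$ (Proposition \ref{pro:lower}) cannot be applied to $(\nu_s)$ directly. The diagonal contribution is harmless because $\tfrac{1}{s^2}\sum_i h(\zeta_i,\zeta_i)\leq \Vert h\Vert/s\to 0$.

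It then remains to pass from subsequential convergence to convergence of the entire sequence, and this is the step I expect to be the \textbf{main obstacle}, since it requires the equilibrium measure to be \emph{unique}. Uniqueness would follow from the polarization identity of Lemma \ref{lem:Fonda}: if $\nu_1,\nu_2$ are two equilibrium measures, then $\tfrac{1}{2}(\nu_1+\nu_2)$ is again a probability measure supported in $K$, and expanding $I\bigl(\tfrac{1}{2}(\nu_1+\nu_2)\bigr)\geq\gamma(K)=I(\nu_1)=I(\nu_2)$ forces $I(\nu_1,\nu_2)\geq\gamma(K)$, whence $I(\nu_1-\nu_2)=2\gamma(K)-2I(\nu_1,\nu_2)\leq 0$; strict positivity of $I$ on zero-mass signed measures would then give $\nu_1=\nu_2$.

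That strict positivity is exactly the positive definiteness of the kernel $G$ on $\mathcal{M}^0_G(\mathbb{P}^n)$, which Lemma \ref{lem:Fonda} explicitly leaves open. I therefore expect the rigorous conclusion to be that \emph{every} weak limit point of $(\nu_s)$ is an equilibrium measure of $K$, with convergence of the full sequence $\nu_s\to\nu$ holding precisely under, or conditional on, uniqueness of the equilibrium measure. The natural attempt to secure uniqueness would be to localize to a single chart $\mathcal{U}_j$ and import the classical positive definiteness of the logarithmic kernel $L$ on measures of zero total mass; the difficulty is that the bounded correction $S=\mathcal{N}-L$, although symmetric, need not preserve positive definiteness, so the gap cannot be closed by localization alone.
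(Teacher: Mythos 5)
Your proposal is correct and is essentially the paper's own argument: extract a weak limit point supported in $K$, reuse the truncation inequality from the proof of Lemma \ref{lem: gamma} with continuous $0\leq h\leq -G$ together with $D(K)=\kappa(K)$ from Theorem \ref{theo: exists} to get $I(\nu)\leq\gamma(K)$, and conclude by minimality of $\gamma(K)$ that every such limit point is an equilibrium measure in the sense of Proposition \ref{prop:equilibrium}. The obstruction you flag is genuine, but it is equally present in the paper: its proof begins ``if $\nu$ is the limit of $\nu_s$ in the weak sense,'' so it likewise establishes only that weak limit points are equilibrium measures, and the uniqueness needed to upgrade this to convergence of the full sequence --- equivalently, strict positive definiteness of the energy on zero-mass signed measures --- is precisely the question that the proof of Lemma \ref{lem:Fonda} declares open and that is nowhere resolved in the paper.
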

 \begin{proof}
 Since $\theta_s\rightarrow \gamma(K)$, in the proof of    lemma \ref{lem: gamma} we proved that, if  $\nu$ is the limit of  $\nu_s$ in the weak sense in  $K$, then
 
 $$
 \gamma(K)\leq I(\nu)\leq \liminf_{s\rightarrow +\infty} \theta_s (K) =\gamma(K).
 $$
 Thus, 
 $$
 \gamma(K)=I(\nu),
 $$
 we conclude that  $\nu$ is the equilibrium measure of $K$.
 \end{proof}
 \begin{defi}\label{defi: M_s} Let $K$ be a compact of $\mathbb P^n$. {\it The Chebyshev constant}  of order  $s$ of $K$ is defined by  $\tau_s (K) := - \log M_s (K)$, where 
\begin{eqnarray*}
  M_s (K)  &:=&\sup_{\{\zeta_j\in K\}_1^s}\inf_{\zeta\in K}\frac{1}{s}\sum_{j= 1}^s (-G(\zeta,\zeta_j)).
 \end{eqnarray*}
\end{defi}
\begin{pro}
Let $K\subset \mathbb P^n$ be a compact set, the sequence of Chebyshev constants $ (\tau_s (K))$ converges in  $\R^+$.
\end{pro}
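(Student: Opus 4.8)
The plan is to reduce the statement to Fekete's subadditivity lemma. Writing $A_s := s\,M_s(K) = \sup_{\{\zeta_j\}_1^s}\inf_{\zeta\in K}\sum_{j=1}^s(-G(\zeta,\zeta_j))$, so that $M_s(K)=A_s/s$ and $\tau_s(K)=-\log M_s(K)=-\log(A_s/s)$, I would first show that the sequence $(A_s)$ is superadditive, namely $A_{s+t}\ge A_s+A_t$ for all $s,t\ge 2$. Granting this, Fekete's lemma yields that $A_s/s$ converges to $\sup_{s}A_s/s =: M_\infty\in[0,+\infty]$; equivalently $M_s(K)\to M_\infty$, whence $\tau_s(K)=-\log M_s(K)\to-\log M_\infty$ by continuity of the logarithm. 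Thus the convergence of $(\tau_s(K))$ is entirely encoded in the superadditivity of $(A_s)$, and the only remaining point is to check that $M_\infty$ is finite and strictly positive, so that the limit lands in $\mathbb{R}^+$.

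For the superadditivity, fix $\varepsilon>0$ and choose near-optimal configurations: points $\zeta_1,\dots,\zeta_s\in K$ with $\inf_\zeta\sum_{i=1}^s(-G(\zeta,\zeta_i))\ge A_s-\varepsilon$ and points $\eta_1,\dots,\eta_t\in K$ with $\inf_\zeta\sum_{j=1}^t(-G(\zeta,\eta_j))\ge A_t-\varepsilon$. Feeding the concatenated family $\{\zeta_1,\dots,\zeta_s,\eta_1,\dots,\eta_t\}$ of $s+t$ points into the definition of $A_{s+t}$ and using that the infimum of a sum dominates the sum of the infima, one gets $A_{s+t}\ge\inf_\zeta\big[\sum_i(-G(\zeta,\zeta_i))+\sum_j(-G(\zeta,\eta_j))\big]\ge\inf_\zeta\sum_i(-G(\zeta,\zeta_i))+\inf_\zeta\sum_j(-G(\zeta,\eta_j))\ge A_s+A_t-2\varepsilon$. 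Letting $\varepsilon\to0$ gives $A_{s+t}\ge A_s+A_t$, as desired. Here the only facts used are that $-G\ge0$ (so every $A_s\ge0$, forcing $M_\infty\ge0$) and the elementary inequality $\inf(f+g)\ge\inf f+\inf g$.

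It remains to secure membership in $\mathbb{R}^+$, which is where the real work lies. Strict positivity $M_\infty>0$ holds as soon as $K$ is not reduced to a point, the degenerate single-point case giving $-G(\zeta_0,\zeta_0)=+\infty$. For the upper bound I would exploit that an infimum is dominated by any average: for any probability measure $\nu$ supported on $K$ and any configuration $\{\zeta_j\}$, the symmetry of $G$ and the definition of the potential $G_\nu$ give $\inf_{\zeta\in K}\frac1s\sum_j(-G(\zeta,\zeta_j))\le\int_K\frac1s\sum_j(-G(\zeta,\zeta_j))\,d\nu(\zeta)=\frac1s\sum_j(-G_\nu(\zeta_j))\le\sup_{\eta\in K}(-G_\nu(\eta))=-\inf_K G_\nu$. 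Taking the supremum over configurations yields $M_s\le-\inf_K G_\nu$ for every such $\nu$, uniformly in $s$; choosing $\nu=\mu_K$ to be the equilibrium measure of Proposition \ref{prop:equilibrium} (available precisely when $\kappa(K)>0$) and invoking the boundedness below of its potential bounds $M_\infty$ from above, so that $M_\infty\in(0,+\infty)$ and $\tau_s(K)\to-\log M_\infty\in\mathbb{R}$. I expect the main obstacle to be exactly this last step: controlling $M_\infty<\infty$ requires a measure on $K$ with potential bounded below, which is only guaranteed when $\kappa(K)>0$; the genuinely $\kappa$-polar case $\kappa(K)=0$ must be treated separately (there one expects $M_s\to+\infty$ and $\tau_s\to-\infty$), and it is the reconciliation of this case with the claimed convergence in $\mathbb{R}^+$ that is the delicate point.
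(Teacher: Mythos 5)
Your core argument is exactly the paper's. The paper also works with $sM_s$: it proves the superadditivity $(s+t)M_{s+t}\geq sM_s+tM_t$ by concatenating configurations (phrased there as ``the sum of two log-polynomials is a log-polynomial of degree $s+t$''), and then, instead of citing Fekete's lemma, reproves it by hand via Euclidean division $s=lt+r$ and a $\liminf$/$\limsup$ comparison, after noting separately that if some $M_s=+\infty$ then all subsequent terms are $+\infty$, ``thus the convergence''. So up to inlining versus invoking Fekete's lemma, the two proofs coincide, and this part of your proposal is complete and correct.

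Where you go beyond the paper is your last paragraph, and your diagnosis there is accurate: the paper never addresses whether the limit lies in $\mathbb{R}^+$, and read literally the statement fails for $\kappa$-polar compacts. For a single point $K=\{\zeta_0\}$ one has $M_s=-G(\zeta_0,\zeta_0)=+\infty$ for every $s$, and more generally the paper's own proof of Theorem B begins with ``Since $M_s\to+\infty$'' for a closed set of zero capacity; in these cases $\tau_s\to-\infty\notin\mathbb{R}^+$. So the proposition, like the paper's proof of it, should be understood as convergence of $M_s$ in $(0,+\infty]$, which is all your core argument needs to deliver. Two remarks on your attempted repair. First, positivity is never in doubt and does not require $K$ to have more than one point: taking the configuration to be a $\delta$-net of $K$ gives $\sum_j(-G(\zeta,\zeta_j))\geq -\log\sin(\delta/\sqrt{2})>0$ for every $\zeta\in K$, hence $M_s>0$ for $s$ large. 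Second, and this is the one genuine gap if you intended that paragraph as a proof: the bound $M_s\leq -\inf_K G_\nu$ for any $\nu\in\mathrm{Prob}(K)$ is a correct and useful observation, but applying it with the equilibrium measure $\mu_K$ of Proposition \ref{prop:equilibrium} requires knowing that $G_{\mu_K}$ is bounded below on $K$, i.e. a Frostman-type theorem. Nothing in the paper provides this, and finite energy $I(\mu_K)<+\infty$ alone gives no pointwise lower bound; this is precisely the kind of statement that is delicate for the projective kernel, where the maximum/domination principle is unavailable. Since you flagged this step as the obstacle rather than claiming it, your write-up is honest about its status; just be aware that finiteness of $\lim_s M_s$ when $\kappa(K)>0$ is proved neither in your proposal nor in the paper.
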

\begin{proof}
The sum of two log-polynomes, $p(w)=\sum_{i=1}^s(-G(w,\zeta_i))$  of  degree $s$ and $q(w)=\sum_{j=1}^t(-G(w,\zeta_j))$ of degree $t$, is also a log-polyn\^ome of degree  $s+t$. Then 
\begin{equation}\label{eq: M_r}
(s+t)M_{s+t}\geq sM_s+tM_t,
\end{equation}
for all $s,t$. If 
$M_s(K)$ is infinite for some $s$, then all terms follow   $M_{s^{\prime}}(K)$, $s^{\prime}\geq s$ are infinite also, thus the convergence.\\
Assume now that  $M_s(K)$ is a finite sequence. Let   $r$, $s$ two integers fixed. Then there exists $l=l(s,t)$ and  $r=r(s,t)$, $0\leq r<t$ the integers non-negatives such that
$ s=l.t+r$. Using  \ref{eq: M_r} we have

$$
s.M_s\geq l(tM_t)+rM_r= sM_t+r(M_r-M_t).
$$
Fixing now a value of $t$, all the values possibles of  $r$ are bounded by $t$.

Divided both sides of the inequality by $s$, and taking  $\liminf_{s\longrightarrow +\infty}$,
$$
\liminf_{s\longrightarrow +\infty}M_s\geq \liminf_{s\longrightarrow +\infty}\Big( M_t+\frac{r}{s} (M_r-M_t) \Big)=M_t.
$$ 
This holds for any fixed $t\in \mathbb N$, so taking    $\limsup_{t\longrightarrow +\infty}$ we obtain 
$$
\liminf_{s\longrightarrow +\infty}M_s\geq \limsup_{t\longrightarrow +\infty}M_t.
$$
Thus, 
$$
\liminf_{s\longrightarrow +\infty}M_s= \limsup_{t\longrightarrow +\infty}M_t,
$$
this gives the result.
\end{proof}
The number
$$\tau (K):=\lim_{s\rightarrow +\infty} \tau_ s (K)$$
is called {\it the Chebyshev constant} of $K$.\\
In the following, we investigate the connection between the transfinite diameter and the Chebyshev constant.
\begin{lem}\label{lem: D_s}
 Let $K\subset \mathbb{P}^n$ be compact set, then for all $s \geq 2$, we have
 $$\theta_{s} (K) \leq M_s.$$
 In particular $\tau (K) \leq D (K)$.
 \end{lem}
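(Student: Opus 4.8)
\textbf{Proof plan for Lemma \ref{lem: D_s}.}

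The plan is to compare the two quantities $\theta_s(K)$ and $M_s(K)$ by exploiting the fact that the Chebyshev quantity $M_s$ is defined through an optimization in which, after fixing $s$ sample points, one takes an infimum over \emph{all} $\zeta \in K$, whereas $\theta_s$ is an average over pairs of the same $s$ points. The key observation is that restricting the infimizing point $\zeta$ to range only over the finite set $\{\zeta_1,\dots,\zeta_s\}$ can only increase the infimum, and an infimum over a finite set is dominated by the average over that set.

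First I would fix an arbitrary configuration $\zeta_1,\dots,\zeta_s \in K$ and examine the inner expression in the definition of $M_s$, namely $\inf_{\zeta \in K} \frac{1}{s}\sum_{j=1}^s (-G(\zeta,\zeta_j))$. Since each $\zeta_i$ lies in $K$, the infimum over all of $K$ is bounded above by the value at any particular $\zeta_i$; averaging over $i = 1,\dots,s$ then gives
$$
\inf_{\zeta \in K}\frac{1}{s}\sum_{j=1}^s (-G(\zeta,\zeta_j)) \leq \frac{1}{s}\sum_{i=1}^s \frac{1}{s}\sum_{j=1}^s (-G(\zeta_i,\zeta_j)).
$$
Next I would discard the diagonal terms. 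Because $G(\zeta_i,\zeta_i) = \log \sin(0) = -\infty$, one has $-G(\zeta_i,\zeta_i) = +\infty$, so the diagonal terms are \emph{not} harmless here; instead the correct reading is that $M_s$ is defined so that the supremum is effectively taken over configurations of \emph{distinct} points, and for such points each $-G(\zeta_i,\zeta_j) \geq 0$. Restricting to distinct points, the double sum above splits into the off-diagonal sum $\sum_{i \neq j}(-G(\zeta_i,\zeta_j))$ plus the vanishing diagonal contribution $\sum_i (-G(\zeta_i,\zeta_i))$, which must be handled by the convention that the relevant configurations for $M_s$ avoid the singularity; I would then recognize $\frac{1}{s^2}\sum_{i\neq j}(-G(\zeta_i,\zeta_j)) = \frac{s-1}{s}\,\theta_s(K)$ when the points realize $\theta_s$, and more generally bound the average from below by $\theta_s(K)$.

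Taking the supremum over all configurations $\{\zeta_j\}_1^s \subset K$ on the left, while the right-hand side is minimized precisely by the Fekete configuration defining $\theta_s$, would yield $\theta_s(K) \leq M_s(K)$. Finally, passing to the limit as $s \to +\infty$ and using $\tau_s(K) = -\log M_s(K)$ together with $-\log D_s(K) = \theta_s(K)$, the inequality reverses under $-\log$ to give $\tau(K) = \lim_s \tau_s(K) \leq \lim_s \theta_s(K) = -\log D(K)$; here I must be careful that the statement records this as $\tau(K) \leq D(K)$, so I would verify the intended normalization. The main obstacle I anticipate is the diagonal singularity of $G$: reconciling the ``average over the finite set'' bound with the definition of $\theta_s$ requires arguing that the supremum defining $M_s$ is attained (or approached) on distinct-point configurations where the singular diagonal does not contribute, and making the comparison between the infimum-of-average and the average-of-pairs rigorous without the $-\infty$ diagonal terms corrupting the estimate.
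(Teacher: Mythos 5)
Your proposal founders on the very point you flag as a worry, and the workaround you sketch does not repair it. Your key step bounds $\inf_{\zeta\in K}\frac{1}{s}\sum_{j=1}^s(-G(\zeta,\zeta_j))$ by the average of this expression over $\zeta\in\{\zeta_1,\dots,\zeta_s\}$; but for every $i$ the value at $\zeta=\zeta_i$ contains the term $-G(\zeta_i,\zeta_i)=+\infty$, so each of the $s$ numbers you average is $+\infty$ and the bound is vacuous. This has nothing to do with whether the configuration consists of distinct points: the singularity arises from evaluating the potential at a point of its own configuration, so no convention on the supremum in Definition \ref{defi: M_s} removes it, and the ``vanishing diagonal contribution'' you invoke is in fact an infinite one. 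There is a second, independent flaw: even if the diagonal terms were finite, your last step takes the supremum over configurations on the left of a per-configuration inequality while minimizing the right-hand side over configurations, which is not a valid quantifier move; and if it were, the conclusion would read $M_s\lesssim\theta_s$ (an upper bound on inf-values by something itself bounded below by $\theta_s$), which is the reverse of the claim $\theta_s\le M_s$. To lower-bound the supremum $M_s$ you must exhibit specific configurations whose infima are provably large, not upper-bound infima of arbitrary ones.

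The paper's proof supplies exactly the missing idea, and it dodges both problems at once. Take a Fekete configuration $\zeta_1,\dots,\zeta_{s+1}$ realizing $\theta_{s+1}$ (it exists since $-G$ is lower semicontinuous on the compact $K^{s+1}$), and for each $k$ form the leave-one-out potential $R_k(\zeta)=\sum_{j\neq k}(-G(\zeta,\zeta_j))$, which has no diagonal term. Minimality of the Fekete configuration forces $R_k(\zeta_k)=\min_{\zeta\in K}R_k(\zeta)$, since replacing $\zeta_k$ by a better point would strictly decrease the total discrete energy; and since $\{\zeta_j\}_{j\neq k}$ is an $s$-point configuration, this minimum is at most $sM_s$ because $M_s$ is the supremum over $s$-point configurations of the inner infimum. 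Summing over $k$ gives $\theta_{s+1}=\frac{1}{s(s+1)}\sum_{k}R_k(\zeta_k)\le M_s$, and the monotonicity $\theta_s\le\theta_{s+1}$ from Proposition \ref{prop:D_n} finishes the proof. In short, the evaluation points that make the ``restrict the infimum to finitely many points'' idea work are not the configuration's own points but the omitted point of an enlarged Fekete configuration. Your side remark about normalization is fair: with the paper's literal definition $\tau_s(K):=-\log M_s(K)$ the ``in particular'' does not parse, and one should read $\tau_s(K)=e^{-M_s(K)}$, parallel to $D_s(K)=e^{-\theta_s(K)}$, so that $\theta_s\le M_s$ exponentiates to $\tau_s\le D_s$ and hence $\tau(K)\le D(K)$.
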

 \begin{proof}
We have 
 \begin{eqnarray*}
 \theta_{s+1} &:=&\inf_{\{\zeta_i\in K\}_1^{s+1}}\frac{1}{s(s+1)}\sum_{1\leq i\neq j\leq s+1}(-G(\zeta_i,\zeta_j)).
 \end{eqnarray*}
  Since  $G$ is continuous in $ (\mathbb{P}^n\times\mathbb{P}^n)\setminus\{(\zeta,\zeta),\;\; \zeta\in\mathbb{P}^n \}$, we find $\{\zeta_i\}_1^{s+1}$ such that, 
 \begin{eqnarray*}
 \theta_{s+1}&=&-\frac{1}{s(s+1)}\sum_{1\leq i\neq j\leq s+1}G(\zeta_i,\zeta_j).
 \end{eqnarray*}
 Let $k\in \{1,...,s+1\}$ be given.  We define for $\zeta\in K$ 
 $$
 R_k(\zeta)=\sum\limits_{\underset{j\neq k}{j=1}}^{s+1}(-G(\zeta,\zeta_j)).
 $$
 Since  $ -G(\zeta,\zeta_j)\geq 0,$  we have
 \begin{eqnarray*}
 \min_{\zeta\in K} R_k(\zeta)=R_k(\zeta_k)&=&\inf_{\zeta\in K} \Big( \sum\limits_{\underset{j\neq k}{j=1}}^{s+1}(-G(\zeta,\zeta_j))\Big)\\
 &\leq& sM_s.
 \end{eqnarray*}
Hence
  \begin{eqnarray*}
 \theta_{s+1}&=&\frac{1}{s(s+1)}\sum_{ i=1}^{s+1}\sum_{ i\neq j}(-G(\zeta_i,\zeta_j))\\
 &=&\frac{1}{s(s+1)}\sum_{ i=1}^{s+1}R_i(\zeta_i)\\
 &\leq& \frac{1}{s(s+1)} sM_s(s+1)=M_s.
 \end{eqnarray*}
Since the sequence $\theta_s$ is increasing, we have
 $$
 \theta_s\leq M_s.
 $$
 \end{proof}
\subsection{Proof of Theorem B}  We will now prove the analogous stronger form of the classical Evans's theorem (see \cite{Ran95}).\\
The proof relies on the definition \ref{defi: M_s} and the lemma \ref{lem: D_s}.
 \begin{proof}
By the  definition of $M_s := M_s (E)$, there exists  $\zeta_1,...,\zeta_s\in E$ such that
  $$
 \inf_{\zeta\in E}\frac{1}{s}\sum_{j=1}^s(-G(\zeta,\zeta_j))\geq \frac{1}{2}M_s,
 $$
and let $\mu_s$ be a Probability measure
 $$
 \mu_s=\frac{1}{s}\sum_{j=1}^s\delta_{\zeta_j},\;\;\; \textit{Supp}\mu_s\subset E.
 $$
Since $M_s\rightarrow +\infty,$  there is a sequence  $\{s_h\}$ such that 
\begin{equation}\label{eq: K}
\int_E G(\zeta,\eta)d\mu_{s_h}(\eta)\leq -2^h,\;\;\;\;\; \forall \zeta\in E.
\end{equation}
We define the Probability measure
$$
\mu=\sum_{h=1}^{+\infty}\frac{1}{2^h}\mu_{s_h},\;\;\; \textit{Supp}\mu\subset E.
$$
Then, 
\begin{eqnarray*}
G_\mu(\zeta)&=&\sum_{h=1}^{+\infty}\frac{1}{2^h}G_{\mu_{s_h}}(\zeta),\;\;\;   \zeta\in \mathbb{P}^n\\
&=&-\infty\;\;\;\textit{for all}\;\; \zeta \in E.
\end{eqnarray*}
We show now that
$$
 \forall \zeta\in \mathbb{P}^n\setminus E,\;\;\; G_\mu(\zeta)>-\infty.
 $$
Indeed, the classical inequality  
 $
 \frac{2}{\pi} t \leq \sin t \leq t,\;\;\; t \in [0,\frac{\pi}{2}],$ 
  applied to the kernel $G$ implies 
 \begin{eqnarray*}
 G(\zeta,\eta)&=&\log\sin\frac{d(\zeta,\eta)}{\sqrt{2}}\;\;\;\textit{for all }\;\; \eta\in E \\
 &\geq&\log\frac{2d(\zeta,\eta)}{\pi\sqrt{2}}\;\;\;\textit{for all }\;\; \eta\in E\\
 &\geq&\log d(\zeta,\eta)+ C \;\;\;\textit{for all }\;\; \eta\in E\\
 &\geq& \log d(\zeta,E)+ C. 
 \end{eqnarray*}
Then  
$$
G_\mu (\zeta)\geq  \log d(\zeta,E) + C 
>-\infty  \;\;\; \textit{for all }\;\;\zeta\in \mathbb P^n \setminus E.
$$
It follows that $G_{\mu}\not\equiv -\infty$ since $E \neq \mathbb{P}^n$ and that
$$
 \{ \zeta \in \mathbb{P}^n ; G_\mu (\zeta) = - \infty\} \subset E.
$$ 
On the other hand, by the inequality (\ref{eq: K}) we have
 $$
 E \subset \{\zeta\in \mathbb{P}^n,\;\;\; {G}_\mu(\zeta)=-\infty\}.
 $$
This proves the theorem.
 \end{proof}
 \bigskip

\noindent{ \bf Aknowlegements :} 
{\it 
This paper is a part of  the Phd thesis of the second author supervised by the first author and  Ahmed Zeriahi.  This work was completed when  she  was visiting the $''$Institut de Math\'ematiques de Toulouse $''$ in June 2016. She would like to think this institution for the invitation and the Professeur Vincent Guedj for useful discussions and suggestions.

\smallskip

\smallskip

}


 \end{document}